\newcommand\dd{~{\rm d}}
\newcommand\DD{\mathcal{D}}
\newcommand\R{\mathbb{R}}
\newcommand\N{\mathbb{N}}
\newcommand\Z{\mathbb{Z}}
\newcommand\C{\mathbb{C}}
\newcommand\Cc{\mathscr{C}}
\newcommand\St{\mathcal{S}}
\newcommand\Kc{\mathcal{K}}
\newcommand\RL{\mathcal{R}}
\newcommand\Sc{\mathscr{S}}
\newcommand\df{\mathfrak{d}}
\newcommand\Gl{\mathcal{G}}
\newcommand{\unfold}{\mathcal{T}}
\newcommand{\doss}{{\rm DoS}}
\newcommand{\ldos}{{\rm LDoS}}
\newcommand{\eps}{\varepsilon}
\newcommand{\FT}{\mathcal{F}}
\newcommand\hH{\hat{H}}
\newcommand\Tr{{\rm Tr}}
\newcommand\aTr{\underline{\rm Tr}}
\newcommand\aTrR{\underline{\rm Tr}_R}
\newcommand\bG{{\bf G}}
\newcommand\bzero{{\bf 0}}
\newcommand{\lodos}[2]{\big[g\big(#1\big)\big]_{#2,#2}}
\newcommand{\lodoss}[3]{\big[g\big(#1\big)\big]_{#2,#3}}
\newcommand{\pwdosWL}[1]{\underline{{\widehat{\rm Tr}}}_{W,L}\Big(g\big(#1\big)\Big)}
\newcommand\setg{\Lambda_{\zeta,\delta}}
\newcommand{\DosK}[2]{\underline{\widetilde{\rm Tr}}_{#1}^h\Big( g\big(\hH^{#2}\big)\Big)}
\crefname{hypothesis}{Hypothesis}{Hypotheses}
\title{Convergence of the Planewave Approximations for Quantum Incommensurate Systems\thanks{Submitted to the editors DATE.
\funding{This work was funded by the National Key R \& D Program of China under grants 2019YFA0709600 and 2019YFA0709601.}
}
}
\author{
Ting Wang\thanks{School of Mathematical Sciences, Beijing Normal University, Beijing 100875, China
(\email{wangting2019@mail.bnu.edu.cn}, \email{chen.huajie@bnu.edu.cn}).}
\and Huajie Chen\footnotemark[2]
\and Aihui Zhou\thanks{
LSEC, Institute of Computational Mathematics and Scientific/Engineering Computing, Academy of Mathematics and Systems Science, Chinese Academy of Sciences, Beijing 100190, China;
School of Mathematical Sciences, University of Chinese Academy of Sciences, Beijing 100049, China
(\email{azhou@lsec.cc.ac.cn}).}
\and Yuzhi Zhou\thanks{
CAEP Software Center for High Performance Numerical Simulation, Beijing 100088, China;
Institute of Applied Physics and Computational Mathematics, Beijing 100094, China
(\email{zhou\_yuzhi@iapcm.ac.cn}).}
\and Daniel Massatt\thanks{
Corresponding author. Mathematics Department, Louisiana State University, LA 70802, USA
(\email{dmassatt@lsu.edu}).}
}
\begin{document}
\nolinenumbers

\maketitle

\begin{abstract}
Incommensurate structures come from stacking the single layers of low-dimensional materials on top of one another with misalignment such as a twist in orientation.
While these structures are of significant physical interest, they pose many theoretical challenges due to the loss of periodicity.
This paper studies the physical observables of continuum Schr\"{o}dinger operators for incommensurate systems.
We characterize the density of states and local density of states in real and reciprocal spaces, and develop novel numerical methods to approximate them.
In particular, we
(i) justify the thermodynamic limit of the density of states and local density of states via the operator kernel characterization;
and (ii) propose efficient numerical schemes based on planewave approximation and trapezoidal rule in reciprocal space.
We present both rigorous analysis and numerical simulations to support the reliability and efficiency of our numerical algorithms. 
\end{abstract}

\begin{keywords}
incommensurate systems, Schr\"{o}dinger operators, the density of states, planewave approximations 
\end{keywords}

\begin{MSCcodes}
65Z05, 81-08, 47G30
\end{MSCcodes}

\section{Introduction}
\label{sec:introduction}
\setcounter{equation}{0}

Low dimensional materials have attracted extraordinary level of interest in the materials science and physics communities due to the unique electronic, optical, and mechanical properties \cite{britnell2013strong,geim2013van,xu2013graphene}.
In particular, when two layers of 2D materials are stacked on top of each other with a small misalignment (such as a twist), they produce incommensurate moir\'{e} patterns.
In the small twist case for example, the electronic properties develop fundamental twist-dependent electronic behaviors such as Van Hove singularities and flat bands near the Fermi energy (see e.g. \cite{cao2018unconventional,carr2020electronic,carr2017twistronics,dai2016twisted,dean2013hofstadter,woods2014commensurate}).
A typical phenomenon is the appearance of strong correlations and superconductivity in twisted bilayer graphene at the magic angle of $1.1^{\circ}$ \cite{bistritzer11,cao2018correlated,cao2018unconventional}.
It is of great importance to study these structures from a theoretical and computational point of view and learn how to control the desired properties by choice of system parameters such as material type, pressure, strain, electromagnetic fields, and twist angles.

The conventional method for simulating the incommensurate systems is to construct a supercell approximation with artificial strain \cite{koda2016,komsa2013electronic,loh2015graphene}, which then allows for the use of Bloch theory and conventional band-structure methods.
However, these approaches are usually computationally expensive, as one may need extremely large supercells to achieve the required accuracy.

The purpose of this work is to study the density of states (DoS) and local density of states (LDoS) of a linear Schr\"{o}dinger operator for incommensurate systems from the mathematical perspective. Ab initio Schr\"{o}dinger models are frequently used to study incommensurate bilayer electronics \cite{cances2023, kaxiras2016} and is a natural platform for considering the non-linear density functional theory.
The DoS and LDoS characterize the spectrum distribution of the system Hamiltonian, and correspond to observables of interest in the study of 2D materials.

The first issue is that a quantitative characterization of the DoS of the Schr\"{o}dinger operator for incommensurate systems is missing.
In \cite{Avron83}, the property of the integrated DoS was studied for one-dimensional almost-periodic systems.
In \cite{massatt2017electronic}, the DoS was introduced in the weak sense within the tight-binding models.
These works on DoS can not be directly generalized to continuous models with arbitrary dimensions. 
We consider a linear Schr\"{o}dinger operator of the form $H = -\frac{1}{2}\triangle + V(x)$, then the DoS and spatial LDoS are written as 
\begin{equation*}
D(E) = \aTr \big(\delta(E-H)\big)
\qquad{\rm and}\qquad
D(E;x) = \big\langle x \big| \delta(E-H) \big| x \big\rangle
\quad {\rm for}~E\in\R~{\rm and}~x\in\R^d 
\end{equation*}
respectively, where $\delta(\cdot)$ represents the Dirac delta function and $\aTr$ represents a spatially averaged trace that will be defined in Section \ref{sec:real}.
Here, we have used the bra-ket notation and the physically motivated language $|x\rangle$, which will be replaced by rigorous mathematical formulations in Section \ref{sec:real}.
To make the objects numerically tractable, we consider the problem in the weak sense. 
More precisely, for a suitable test function $g$, we consider the DoS and spatial LDoS given by
\begin{equation}
\label{eq:dos:ldos}
\doss(g) = \aTr \big(g(H)\big)
\qquad{\rm and}\qquad
\ldos(x;g) = \big\langle x |g(H)| x \big\rangle,
\end{equation}
respectively.
Rigorous justifications of these objects are provided in Section \ref{sec:real}.
Although the linear Schr\"{o}dinger operator has a simple form, the lack of compactness, broken translation symmetry, and continuous nature of the operator make it difficult to address the above objects of an incommensurate system. 
To handle these problems, we will use the spectral theory \cite{simon2015operator} to study $g(H)$: While $g(H)$ is not a trace class operator, it can be decomposed into a discrete collection of trace class operators.

The second problem is how to efficiently evaluate the (well-defined) DoS and LDoS of an incommensurate system.
In \cite{zhou2019plane}, the Hamiltonian was discretized by a planewave basis set with a brute cutoff, and the DoS was approximated by the resulting eigenvalues.
This type of approach essentially transfers the low dimensional incommensurate problem into a high dimensional periodic problem, which is expensive most of the time and converges slowly with respect to the planewave cutoff.
In this work, we propose an efficient numerical scheme based on our formulation of DoS in reciprocal  space.
In particular, we  approximate the reciprocal LDoS within a planewave framework and then evaluate the DoS via a trapezoidal rule.
We further improve the heuristic planewave method in \cite{zhou2019plane} by introducing a novel cutoff scheme.
In particular, we split the cutoff of wave vectors in the high dimensional reciprocal space into two directions: one increases the planewave frequency while the other one traverses the reciprocal space.
A key observation is that the errors of the planewave approximations decay at completely different speeds as the cutoff increases along the two directions.
Therefore, we truncate the wave vectors in the two different directions with different cutoffs, such that the cutoff for high frequency direction can be much smaller.
We provide a rigorous numerical analysis, as well as numerical simulations of some typical incommensurate systems, to show the efficiency of our algorithms.

\vskip 0.1cm

{\bf Further remarks on existing works.}

\vskip 0.1cm

{\it On tight-binding model.}
Interest in the mathematics community has recently emerged to develop rigorous foundations, improve models, and construct computational methods for incommensurate systems.
In \cite{tritsaris2016perturbation}, a general methodology based on perturbation theory was proposed for simulating the weakly coupled two-dimensional layers. 
In \cite{cances2017generalized}, the Kubo formula for the transport properties of incommensurate systems was given by exploiting the $C^*$-algebra approach.
In \cite{massatt2017electronic}, the thermodynamic limit of the DoS was justified within the tight-binding models and represented by an integral over local configurations. 
In \cite{massatt2018incommensurate}, the DoS of the tight-binding models was characterized in the reciprocal space by using a Bloch transformation operator. 
In \cite{etter2020modeling,massatt2020efficient}, efficient numerical algorithms were designed for calculating the conductivity of incommensurate systems.
In \cite{massatt2021electronic}, the relations between the real space, configuration space, and reciprocal space for incommensurate two-dimensional systems were classified, which provides a general class of electronic observables with a mathematical foundation.
Most of the existing works focus on the tight binding models, while the studies of continuous electronic structure models are less common (see recent works \cite{cances2022simple,watson2022bistritzer,zhou2019plane}) due to the heavy computation cost.

\vskip 0.1cm

{\it On quasi-periodic problems.}
The incommensurate system is in fact a typical quasi-periodic system \cite{dinaburg1975one,karpeshina2013multiscale}.
The Schr\"{o}dinger operator for quasi-periodic systems has attracted much research interest and there are many results concerning the spectral properties in the literature.
In \cite{Avron82,Avron83,dinaburg1975one,eliasson1992floquet,surace1990schrodinger}, the spectral properties of one-dimensional quasi-periodic systems were studied thoroughly for both discrete and continuous setting.
In \cite{karpeshina2013multiscale,karpeshina2019extended}, the existence of absolutely continuous spectrum at high energy in the two-dimensional case was discussed. 
Nevertheless, the case of general dimension is significantly more complicated and is still an open problem. 
Most of the existing works focus on theoretical characterization of the spectrum set, while the study towards the related physical observables is still missing.
In this work, the physical observables are viewed as the acting of a smooth test function on the spectral density, and their thermodynamic limit are justified in a ``weak" sense.
The theory works for arbitrary dimensions and allows us to further design numerical schemes to estimate these quantities.

\vskip 0.1cm

{\bf Outline.}
The rest of this paper is organized as follows. 
In Section \ref{sec:incommensurate}, the incommensurate systems, the DoS, and the LDoS of related Schr\"{o}dinger operators are briefly introduced.
In Section \ref{sec:real}, the DoS and LDoS of the incommensurate systems are justified with the help of a planewave description of the operator kernel.
In Section \ref{sec:pw}, some efficient numerical schemes are proposed based on the planewave discretization and the reciprocal space quadrature rule. 
In Section \ref{sec:comput}, the numerical experiments on some incommensurate systems are performed to support the theory. 
In Section \ref{sec:conclusions}, some conclusions are drawn. 
The detailed proofs are put in the appendices.

\vskip 0.1cm
{\bf Notations.}
In this paper, we will denote the dimension of the systems by $d$, and $B_R(x)\subset\R^d$ the ball centered at $x$ with radius $R$.
In particular, $B_R$ will denote the ball centered at the origin.
For a bounded domain $\Omega\subset\R^d$, $|\Omega|$ will denote its volume. 
For a finite discrete set $\mathcal{I}$, $\#\mathcal{I}$ will denote its cardinality.  
The Schwartz space, the set of all rapidly decreasing smooth functions, will be denoted by $\Sc(\R^d)$ with
\begin{equation*}
\label{def:schwartzspace}
\Sc(\R^d)=\Bigg\{f\in C^{\infty}(\R^d) :~ \sup_{x\in \R^d}(1+|x|^2)^{t} |\partial^{\alpha}f(x)|<\infty \quad {\rm for~all}~t>0,~\alpha\in \N^d \Bigg\} .
\end{equation*}
We will denote by $|\cdot|$ the Euclidean norm of a vector, and by $||\cdot||_{L^p}~(1\leq p<\infty)$ the $L^p$ norm of a function.
For a trace class operator $A$, we will denote its trace by ${\rm Tr}(A)$ and its trace norm by $\|A\|_{\rm tr}$.
For a self-adjoint operator $A:\mathcal{D} \subset L^2(\R^d)\rightarrow L^2(\R^d)$, we will denote its operator kernel by $K_A(x,y)$, such that
$A\phi(x)=\int_{\R^d}K_A(x,y)\phi(y)\dd y$ for $\phi\in L^2(\R^d)$.
We will denote the Fourier transform by $\FT: L^2(\R^d)\rightarrow L^2(\R^d)$ and the inverse Fourier transform by $\FT^{-1}: L^2(\R^d)\rightarrow L^2(\R^d)$, which are given on $\Sc(\R^d)$ by
\begin{align}
\label{trans:F}
\FT \psi(\xi) := \hat\psi(\xi) 
:= \int_{\R^d}e^{-i\xi\cdot x}\psi(x) \dd x
\qquad{\rm and}\qquad
\FT^{-1}\hat \psi(x) := \frac{1}{(2\pi)^{d}}\int_{\R^d}e^{i\xi\cdot x} \hat\psi(\xi)\dd\xi ,
\end{align}
respectively.
Throughout this paper, the symbol $C$ will denote a generic positive constant that may change from one line to the next, which will always remain independent of the approximation parameters and the choice of test functions.
The dependencies of $C$ will normally be clear from the context or stated explicitly.

\section{Schr\"{o}dinger operator for incommensurate systems}
\label{sec:incommensurate}
\setcounter{equation}{0}

We consider two $d$-dimensional $(d=1,2)$ periodic systems that are stacked in parallel along the $(d+1)$th dimension. 
We will neglect the $(d+1)$th dimension and the distance between the two layers for simplicity of the presentations,
since this dimension raises no essential mathematical insights of the problem\footnote{The full $(d+1)$-dimensional space is a product of the $d$-dimensional subspace in which the two periodic lattices lie in and the direction along which the systems are stacked.
All the difficulties of mathematical understanding come from that the system is extended and non-periodic in the $d$ dimensional subspace, while in the $(d+1)$th direction one can easily construct a basis set for the system.

Nevertheless, we should point out that in practical simulations and various modeling works, the inter-layer displacement in the $(d+1)$th direction plays an very important role \cite{bistritzer11}.}.

Each individual periodic layer can be described by a Bravais lattice
\begin{eqnarray*}
\RL_j := \big\{ A_j n~:~ n\in\Z^d \big \} \quad{\rm for}~ j=1,2,
\end{eqnarray*}
where $A_j\in\R^{d\times d}~(j=1,2)$ is invertible. 
The periodicity implies translation invariant with respect to its lattice vectors, i.e. $\RL_j=A_j n+\RL_j,~\forall~n\in\Z^d$.
The unit cell for the $j$-th layer is denotedy by
\begin{eqnarray*}
\Gamma_j := \big\{ A_j \alpha ~:~ \alpha\in[0,1)^d \big\} \quad {\rm for}~ j=1,2 .
\end{eqnarray*}
The associated reciprocal lattice and reciprocal unit cell are then given by 
\begin{eqnarray*}
\RL^*_j = \big\{ 2\pi A_j^{-{\rm T}} n~:~ n\in\Z^d \big \} \qquad {\rm and} \qquad 
\Gamma^*_j = \big\{ 2\pi A_j^{-{\rm T}}  \alpha ~:~ \alpha\in[0,1)^d \big\} ,
\end{eqnarray*}
respectively. 
Although each individual lattice $\RL_j$ is periodic, 
the joined system $\RL_1\cup\RL_2$ may lose the translation invariance property.
We give the following definition of {\it incommensurate} for systems consisting of two periodic lattices.
 
\begin{definition}
\label{incomm}
Two Bravais lattices $\RL_1$ and $\RL_2$ are called incommensurate if
\begin{eqnarray*}
\RL_1\cup\RL_2 + \tau = \RL_1\cup\RL_2 \quad \Leftrightarrow \quad \tau=\pmb{0}\in\R^d .
\end{eqnarray*}
\end{definition}

In this paper, we will consider systems such that not only the lattices $\RL_1$ and $\RL_2$ are incommensurate, but their associated reciprocal lattices $\RL^*_1$ and $\RL^*_2$ are also incommensurate\footnote{We provide an example in the following to indicate that the incommensurate conditions in real and reciprocal spaces are not equivalent.
Let
\[
\RL_1 = \left[\begin{array}{cc}
     1 & \sqrt{2} \\
     0 & \sqrt{3} 
\end{array}\right]\Z^2
\qquad{\rm and}\qquad
\RL_2 = \left[\begin{array}{cc}
     1 & 0 \\
     0 & 1 
\end{array}\right]\Z^2 .
\]
Then we have
\[
\RL^*_1 = 2\pi\left[\begin{array}{cc}
     1 & 0 \\
     -\sqrt{6}/3 & \sqrt{3}/3 
\end{array}\right]\Z^2
\qquad{\rm and}\qquad
\RL^*_2 = 2\pi\left[\begin{array}{cc}
     1 & 0 \\
     0 & 1 
\end{array}\right]\Z^2 .
\]
We can see that $\RL_1$ and $\RL_2$ are commensurate (along the $(1,0)^{\rm T}$ direction), while $\RL^*_1$ and $\RL^*_2$ are incommensurate.}. 
We focus on the following Schr\"{o}dinger operator for a bi-layer incommensurate system
\begin{eqnarray}
\label{H}
H := -\frac{1}{2}\Delta + v_1 + v_2,
\end{eqnarray}
where $v_j:\R^d\rightarrow\R~(j=1,2)$ are smooth and $\RL_j$-periodic functions. 
Due to the periodicity of the potentials, we can write $v_j$ by the Fourier series
\begin{eqnarray}
\label{V_series}
v_j(x) = \sum_{G\in \RL_j^*} \hat{V}_{j,G} e^{iG\cdot {\bf x}} \qquad{\rm with}\qquad 
\hat{V}_{j,G} = \frac{1}{|\Gamma_j|}\int_{\Gamma_j}v_j(x)e^{-iG\cdot x}\dd x \qquad j=1,2.
\end{eqnarray}
We will assume that the potentials $v_j~(j=1,2)$ are analytic functions, then the Fourier coefficients can decay exponentially fast, i.e. 
\begin{eqnarray}
\label{decayVj}
|\hat{V}_{j,G}| \leq C e^{-\gamma |G|} 
\qquad {\rm for}~G\in\RL_j,~j=1,2
\end{eqnarray}
with some $\gamma>0$.
Note that the analytic assumption on $v_j$ is actually very strong, but this is for simplicity and clarity of the presentations so that the planewave approximations in Section \ref{sec:pw} can process a clean exponential convergence rate.
Less regularity of the potentials $v_j$ will lead to corresponding slower convergence rate (for example, $v_j\in C^{\infty}$ will lead to super-algebraic convergence rate). Other than this, all theoretical results can be generalized directly without difficulty.
By using \eqref{H}, \eqref{V_series} and the definition of Fourier transform in \eqref{trans:F}, we have that
\begin{equation}
\label{FTH0}
\Big(\FT(H\psi)\Big)(\xi) = \frac{1}{2}|\xi|^2 \hat \psi(\xi) + \sum_{G_1 \in \RL_1^*} \hat V_{1,G} \hat \psi(\xi - G_1) + \sum_{G_2 \in \RL_2^*} \hat V_{2,G} \hat \psi(\xi - G_2)
\qquad{\rm for}~\xi\in\R^d.
\end{equation}

The physical observables of the system are determined by the DoS of the Hamiltonian operator \eqref{H}. 
More precisely, it is given by the ``trace" of $g(H)$, where $g\in\Sc(\R)$ is related to the observables under consideration.
In this work, we consider functions that possess stronger regularities, which belong to the following space
\begin{multline}
\label{deftestfunctiong}
\qquad
\setg:=\Big\{g\in \Sc(\R) ~:~ g~{\rm admits~an~analytic~continuation~to}
\\[1ex]
~ S_{\delta} = \{z\in \C,~|{\rm Im} z| \leq \delta\} ,
~~{\rm and}~~ \big|g(z)\big|\leq Ce^{-\zeta |{\rm Re}z|}~~\forall~z\in S_{\delta} \Big\}
\qquad
\end{multline}
with some $\zeta>0$ and $\delta>0$.
A frequently used example is to consider a Gaussian centred at $E\in\R$
\begin{align*}
g(\lambda) := \frac{1}{\sqrt{2\pi}\eps}e^{-(\lambda-E)^2/2\eps^2} 
\qquad\lambda\in\R ,
\end{align*}
and then approximate the DoS by convoluting the spectral density measure with the Gaussian.
Here the width of Gaussian $\eps$ can be thought of as a DoS resolution, which typically should be small to capture detailed spectral features such as Van Hove singularities and flat bands.
From the physical viewpoint of the DoS, we are interested in an ``averaged trace" of $g(H)$ as we are considering extended systems over $\R^d$.
However, neither the operator $g(H)$ nor the ``averaged trace" is well defined yet.
We will first show in the next section that the trace averaged to unit volume can be justified as the thermodynamic limit.

At the end of this section, we introduce a ``shifted" Hamiltonian in the reciprocal space, which will be crucial for both proving the existence and providing a representation of LDoS (see Section \ref{sec:real}). 
It will also give the structural background for the numerical algorithm proposed in this work (see Section \ref{sec:pw}). 
For $\xi\in\R^d$, let $\hH(\xi) : \C^{\RL_1^* \times \RL_2^*} \rightarrow \C^{\RL_1^* \times \RL_2^*}$ be given by
\begin{align}
\label{Hhat:xi}
\big[\hH(\xi)\big]_{\bG,\bG'} = \frac{1}{2} \big|\xi+G_1+G_2\big|^2\delta_{G_1G_1^{'}} \delta_{G_2G_2^{'}} + \hat{V}_{1,G_1-G_1^{'}} \delta_{G_2G_2^{'}} + \hat{V}_{2,G_2-G_2^{'}} \delta_{G_1G_1^{'}} 
\end{align}
with ${\bf G} = (G_1,G_2), ~\bG'=(G_1^{'},G_2^{'}) \in \RL_1^* \times \RL_2^*$.
We can see from a rigorous justification (see Appendix \ref{sec:proof:lemma:gH00}, Lemma \ref{lemma:locality:reciprocal}) that for $g\in \setg$, the matrix elements of $g\big(\hH(\xi)\big):\C^{\RL_1^* \times \RL_2^*} \rightarrow \C^{\RL_1^* \times \RL_2^*}$ are well-defined as thermodynamic limits in the reciprocal space.

\begin{remark}[connection between $\hH(\xi)$ and $H$]
\label{remark:Hxi}
For $\xi\in\R^d$, let $\unfold_\xi : \Sc(\R^d) \rightarrow \C^{\RL_1^* \times \RL_2^*}$ be an ``unfolding" operator in the reciprocal space defined by $(\unfold_\xi \hat{\psi})({\bf G}) = \hat{\psi}\big(\xi + G_1 + G_2\big)$ for $\hat{\psi}\in\Sc(\R^d)$ and ${\bf G} = (G_1,G_2) \in \RL_1^* \times \RL_2^*$.
The definition of $\unfold_\xi$ together with \eqref{FTH0} and \eqref{Hhat:xi} implies 
\begin{eqnarray}
\label{connection}
\unfold_\xi \big(\widehat{H\psi}\big) = \hH(\xi) \big(\unfold_\xi\hat\psi\big)
\qquad \forall~\psi\in \Sc(\R^d) .
\end{eqnarray}
We refer to Appendix \ref{sec:proof:lemma:gH00} (proof of \eqref{connection}) for a detailed proof of this connection.
\end{remark}

\section{Thermodynamic limit of the density of states}
\label{sec:real}
\setcounter{equation}{0}

We first justify the trace of a real-space operator $g(H)$~($g \in \setg$) that is restricted on a bounded domain.
To do this, we introduce a partition of unity on $\R^d$.
Let $\chi\in C^{\infty}(\R^d)$ be a cut-off function satisfying
\begin{equation*}
\label{partitionofunity}
\chi \geq 0, \qquad
\chi(x) = \left\{  
\begin{array}{lr}  
1,  \quad |x|\leq\frac{1}{3}
\\[1ex]  
0,  \quad |x|>1
\end{array}
\right. 
\qquad {\rm and} \qquad
\sum_{j\in \Z^d}\chi(x-j) \equiv 1 .
\end{equation*}
Let $\chi_j(x):=\chi(x-j)$. 
Then $\{\chi_j\}_{j\in \Z^d}$ forms a smooth partition of unity of $\R^d$.
In the following, $\chi_j~(j\in \Z^d)$ can also be viewed as multiplication operators on $L^2(\mathbb{R}^d)$ based on the context. 
Using the spectral theory \cite{simon2015operator}, we can derive the following lemma, which justifies the trace of the real-space ``restricted" operator $\chi_j g(H) \chi_k$ and provides a planewave representation of the operator kernel.
The proof is given in Appendix \ref{sec:proof:lemma:chigH}.

\begin{lemma}
\label{lem:boundtracejk}
Let $j,k\in \Z^d$ and $g \in \setg$. 
Then the operator $\chi_j g(H)\chi_k$ is trace class, and it has a continuous kernel $K_{jk}(x,y)$ with 
\begin{equation}
\label{K_jk}
K_{jk}(x,y) = \frac{1}{(2\pi)^d}  \chi_j(x)\chi_k(y)\sum_{{\bf G}=(G_1,G_2) \in \RL_1^*\times \RL_2^*}e^{-i (G_1+G_2)\cdot y} \int_{\R^d}e^{i\xi\cdot(x-y)} \big[g(\hH(\xi))\big]_{{\bf 0},{\bf G}} \;\dd\xi .
\end{equation}
Moreover, there exists a constant $C>0$ independent of $j$ and $k$ such that
\begin{equation}
\label{lem:trjk}
{\rm Tr}\big(\chi_jg(H)\chi_k\big) = \int_{\R^d} K_{jk}(x,x)\dd x
~ \left\{
\begin{array}{ll}
\leq C & \quad{\rm if}~|j-k|\leq 1 
\\[1ex]
= 0 & \quad{\rm if}~|j-k|>1
\end{array} 
\right. .
\end{equation}
\end{lemma}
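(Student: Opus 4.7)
The plan is to derive the kernel formula first from the ``unfolding'' identity in Remark \ref{remark:Hxi}, and then convert that formula into trace-class and trace-evaluation statements using the rapid decay of $[g(\hH(\xi))]_{\bzero,\bG}$.

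First I would apply the functional calculus on both sides of \eqref{connection}: since $\unfold_\xi$ is essentially a pointwise reindexing of $\hat\psi$ at the shifted points $\xi+G_1+G_2$, and since the identity \eqref{connection} says $\hH(\xi)$ is conjugate to $H$ under $\unfold_\xi\circ\FT$, an approximation argument (e.g.\ approximating $g$ by polynomials on compact subsets of $\R$, or using a Helffer--Sj\"ostrand type representation available from the analyticity in \eqref{deftestfunctiong}) yields $\unfold_\xi(\widehat{g(H)\psi}) = g(\hH(\xi))\,\unfold_\xi\hat\psi$. Reading off the $\bG=\bzero$ component of this identity and applying the Fourier inversion \eqref{trans:F} gives
\begin{equation*}
(g(H)\psi)(x) = \frac{1}{(2\pi)^d}\int_{\R^d}\int_{\R^d}e^{i\xi\cdot(x-y)}\sum_{\bG\in\RL_1^*\times\RL_2^*}[g(\hH(\xi))]_{\bzero,\bG}e^{-i(G_1+G_2)\cdot y}\psi(y)\,\dd y\,\dd\xi,
\end{equation*}
and multiplying by $\chi_j(x)$ and $\chi_k(y)$ produces exactly the kernel \eqref{K_jk}. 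The interchange of sum, integrals, and $\FT^{-1}$ will be justified in the next step.

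Next I would turn to convergence and regularity of the kernel. Lemma \ref{lemma:locality:reciprocal} (cited from Appendix \ref{sec:proof:lemma:gH00}) gives the decisive input: the analyticity plus exponential decay encoded in $g\in\setg$, together with the fact that $\hH(\xi)$ acts on $\ell^2(\RL_1^*\times\RL_2^*)$ like a discrete Laplacian $\tfrac12|\xi+G_1+G_2|^2$ perturbed by two exponentially decaying convolutions \eqref{decayVj}, should yield an estimate of the form $\big|[g(\hH(\xi))]_{\bzero,\bG}\big|\le C e^{-\zeta|\xi|-c|\bG|}$ for some $c>0$, uniformly. With such a bound both the sum over $\bG$ and the integral over $\xi$ converge absolutely and yield a continuous kernel with $\|K_{jk}\|_{L^\infty}\le C$ independent of $j,k$. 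This simultaneously legitimizes Fubini in the derivation above and shows $\chi_j g(H)\chi_k$ is given by an $L^2(\R^d\times\R^d)$ kernel supported in $B_1(j)\times B_1(k)$, so in particular Hilbert--Schmidt.

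To upgrade to trace class and justify the formula $\Tr(\chi_j g(H)\chi_k)=\int K_{jk}(x,x)\,\dd x$, I would factor $\chi_j g(H)\chi_k = \big(\chi_j\langle H\rangle^{-m}\big)\cdot\big(\langle H\rangle^{m}g(H)\langle H\rangle^{m}\big)\cdot\big(\langle H\rangle^{-m}\chi_k\big)$ for large enough $m$, with $\langle H\rangle:=(1+H^2)^{1/2}$; the middle factor is bounded since $\lambda^{2m}g(\lambda)$ is still in $\setg$, and the outer factors are Hilbert--Schmidt because $\chi_j$ is compactly supported and $\langle H\rangle^{-m}$ has an $L^2$ kernel for $m$ sufficiently large (standard elliptic regularity of $-\tfrac12\Delta+v_1+v_2$). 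Then $\chi_j g(H)\chi_k$ is a product of Hilbert--Schmidt operators with a bounded operator, hence trace class, and the continuity of $K_{jk}$ gives the pointwise trace formula. Finally, the two cases in \eqref{lem:trjk} follow immediately: the uniform bound $\|K_{jk}\|_{L^\infty}\le C$ combined with $|\operatorname{supp}\chi_j\cap\operatorname{supp}\chi_k|\le C$ gives the first line, while the support property $\operatorname{supp}\chi_j\cap\operatorname{supp}\chi_k=\emptyset$ for widely separated $j,k$ (owing to $\operatorname{supp}\chi\subset\overline{B_1}$) forces $K_{jk}(x,x)\equiv 0$ for the second line.

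The main obstacle is the decay estimate on $[g(\hH(\xi))]_{\bzero,\bG}$ in Step 2, since $\hH(\xi)$ is an unbounded self-adjoint operator on an infinite-dimensional lattice space and neither its resolvent nor its spectral projections are trivially local. I expect to handle this via the Helffer--Sj\"ostrand or Cauchy integral representation $g(\hH(\xi))=\tfrac{1}{2\pi i}\oint g(z)(z-\hH(\xi))^{-1}\,\dd z$ along a contour inside the analyticity strip $S_\delta$, combined with Combes--Thomas type off-diagonal decay estimates on the resolvent that exploit \eqref{decayVj} and the discrete kinetic term $\tfrac12|\xi+G_1+G_2|^2$ --- the uniformity in $\xi$ being the delicate point and the reason the proof is deferred to Appendix \ref{sec:proof:lemma:gH00}.
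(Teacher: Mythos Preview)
Your proposal is correct and tracks the paper's proof closely: the kernel formula via the unfolding identity and functional calculus, the decay input on $[g(\hH(\xi))]_{\bzero,\bG}$ (this is actually Lemma~\ref{lemma:tdl:gH00} rather than Lemma~\ref{lemma:locality:reciprocal}, though you correctly anticipate its proof via a Cauchy contour representation and Combes--Thomas resolvent bounds), the continuity of $K_{jk}$, and the support argument for \eqref{lem:trjk} all match the paper. The one genuine difference is the trace-class factorization. The paper takes $\Lambda=\sqrt{1-\Delta}$ and writes $\chi_j g(H)\chi_k = (\chi_j\Lambda^{-1})(\Lambda g(H)\Lambda)(\Lambda^{-1}\chi_k)$, so the outer factors are Hilbert--Schmidt by the standard $f(x)h(-i\nabla)$ criterion, while boundedness of the middle factor $\Lambda g(H)\Lambda$ is \emph{not} automatic and is established by another appeal to the decay estimate of Lemma~\ref{lemma:tdl:gH00}. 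Your choice $\langle H\rangle=(1+H^2)^{1/2}$ reverses the division of labor: the middle factor $\langle H\rangle^{m}g(H)\langle H\rangle^{m}$ is trivially bounded by functional calculus, and the work is moved to showing $\chi_j\langle H\rangle^{-m}$ is Hilbert--Schmidt via elliptic comparison of $H$ with $-\Delta$. Both routes are valid; the paper's keeps the outer factors elementary Fourier multipliers at the price of re-using the off-diagonal decay once more, whereas yours concentrates the analytic input into the single decay lemma.
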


To derive the thermodynamic limit of DoS, we consider the trace of $g(H)$ restricted on a bounded domain $B_R$, divided by the volume $|B_R|$, and then take the limit of $R\rightarrow\infty$.
For $g\in\setg$ and a given $R>0$, we define the ``averaged" trace on $B_R$ as
\begin{equation}
\label{def:dosR}
\aTrR\big(g(H)\big):=\frac{1}{|B_R|}\sum_{j,k\in \Z^d\cap B_R}{\rm Tr}\big(\chi_jg(H)\chi_k\big) . 
\end{equation}
We have from Lemma \ref{lem:boundtracejk} that $\aTrR\big(g(H)\big)$ can be uniformly bounded by a constant independent of $R$.
We shall further show in the following theorem that the limit $\lim_{R\rightarrow\infty}\aTrR\big(g(H)\big)$ exists, which will verify that the limit of \eqref{def:dosR} provides a well-defined DoS.
Moreover, the limit can also be represented by the LDoS in reciprocal space
\begin{align}
\label{ldos:reciprocal}
\widehat{\ldos}(\xi;g) := \big[g(\hH(\xi))\big]_{{\bf 0,0}} 
\qquad{\rm for}~\xi\in\R^d.
\end{align}
The proof of the theorem is given in Appendix \ref{sec:proof:theo:tdllimit}.

\begin{theorem}
\label{theo:tdllimit}
Let $g\in \setg$.
Then the limit $\displaystyle \aTr\big(g(H)\big):=\lim_{R\rightarrow\infty} \aTrR\big(g(H)\big)$ exists,
and
\begin{align}
\label{tdl}
\aTr\big(g(H)\big)
= \int_{\R^d} \widehat{\ldos}(\xi;g) \dd\xi .
\end{align}
\end{theorem}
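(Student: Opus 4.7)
The plan is to expand $\aTrR\big(g(H)\big)$ using the kernel representation from Lemma \ref{lem:boundtracejk}, separate the bulk from the boundary of $B_R$, and then exploit the reciprocal-space incommensurability of $\RL_1^*$ and $\RL_2^*$ to isolate the $\bG=\bzero$ Fourier mode in the limit $R\to\infty$. Throughout, set $F(\bG):=\int_{\R^d}\big[g(\hH(\xi))\big]_{\bzero,\bG}\,\dd\xi$, so that $F(\bzero)$ is, up to the $(2\pi)^d$ prefactor dictated by the Fourier convention \eqref{trans:F}, the right-hand side $\int_{\R^d}\widehat{\ldos}(\xi;g)\,\dd\xi$ of \eqref{tdl}.

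First, substitute the kernel formula \eqref{K_jk} at $y=x$ into \eqref{def:dosR} and swap the finite sum with the $x$-integral to obtain
\begin{equation*}
\aTrR\big(g(H)\big) \;=\; \frac{1}{(2\pi)^d|B_R|}\int_{\R^d}\Phi_R(x)\sum_{\bG\in\RL_1^*\times\RL_2^*}F(\bG)\,e^{-i(G_1+G_2)\cdot x}\,\dd x,
\end{equation*}
where $\Phi_R(x):=\sum_{j,k\in\Z^d\cap B_R}\chi_j(x)\chi_k(x)$. The locality $\Tr(\chi_jg(H)\chi_k)=0$ for $|j-k|>1$ from Lemma \ref{lem:boundtracejk}, together with the partition-of-unity identity $\sum_{j\in\Z^d}\chi_j\equiv 1$, gives $\Phi_R(x)=1$ for $x\in B_{R-1}$, while $0\le\Phi_R\le 1$ is supported in an $O(1)$-neighborhood of $\partial B_R$ of volume $O(R^{d-1})$. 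Dividing by $|B_R|=O(R^d)$ reveals that this boundary contribution is $O(R^{-1})$.

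After Fubini, the leading term becomes
\begin{equation*}
\frac{1}{(2\pi)^d}\sum_{\bG}F(\bG)\,\frac{1}{|B_R|}\int_{B_R}e^{-i(G_1+G_2)\cdot x}\,\dd x.
\end{equation*}
The $\bG=\bzero$ contribution is $(2\pi)^{-d}F(\bzero)$ for every $R$ and delivers the target value. For any $\bG\neq\bzero$, the incommensurability of $\RL_1^*$ and $\RL_2^*$ forces $G_1+G_2\neq 0$, and the classical Bessel-type estimate on the Fourier transform of the indicator $\mathbf{1}_{B_R}$ yields $|B_R|^{-1}\big|\int_{B_R}e^{-i(G_1+G_2)\cdot x}\,\dd x\big|\to 0$ as $R\to\infty$, with modulus always bounded by $1$. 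Dominated convergence in the discrete variable $\bG$, using $|F(\bG)|$ as the majorant, then eliminates every nonzero mode and produces the limit.

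The main obstacle is establishing absolute summability $\sum_{\bG}|F(\bG)|<\infty$ with enough uniformity to both justify the Fubini interchange and to serve as the dominated-convergence majorant. This requires quantitative off-diagonal decay of $\big[g(\hH(\xi))\big]_{\bzero,\bG}$ in $\bG$ that is integrable in $\xi$. Such a bound should come from a Helffer--Sj\"{o}strand or Combes--Thomas argument applied to $g(\hH(\xi))$: the exponential decay \eqref{decayVj} of $\hat V_{j,G}$ renders $\hH(\xi)$ essentially banded along the $G_1$- and $G_2$-coordinates, the analyticity of $g$ in the strip $S_\delta$ from \eqref{deftestfunctiong} propagates this into exponential off-diagonal decay of $g(\hH(\xi))$, and the growth of the kinetic diagonal $\tfrac12|\xi+G_1+G_2|^2$ furnishes the required integrability in $\xi$. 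Granted this locality estimate, the remaining steps---Fubini, the oscillatory bound on $B_R$, and dominated convergence---are routine.
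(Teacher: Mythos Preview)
Your proposal is correct and follows essentially the same route as the paper's proof: expand via the kernel formula, isolate the $\bG=\bzero$ mode, and kill the $\bG\neq\bzero$ modes using reciprocal-space incommensurability together with dominated convergence, the majorant being the exponential off-diagonal decay of $[g(\hH(\xi))]_{\bzero,\bG}$. The only cosmetic difference is that the paper keeps the weight $\chi_R^2(x)=\big(\sum_{j\in\Z^d\cap B_R}\chi_j(x)\big)^2$ throughout and uses $\frac{1}{|B_R|}\int_{\R^d}\chi_R^2\,e^{-i\zeta\cdot x}\dd x\to 0$ directly, rather than first replacing $\Phi_R$ by $\mathbf{1}_{B_R}$ and handling a boundary layer; the decay estimate you correctly flag as the crux is exactly Lemma~\ref{lemma:tdl:gH00} (inequality \eqref{decay-xi-G1-G2}).
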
 

Theorem \ref{theo:tdllimit} elucidates that the thermodynamic limit of $\aTrR\big(g(H)\big)$ is well-defined in the weak sense.
It gives us the physical observable (averaged on unit volume) for incommensurate systems. 

We then introduce the notion of the spatial LDoS, which formally is of the form $\langle x| g(H) |x\rangle$. 
We make this definition precise through the operator kernel.
Based on Lemma \ref{lem:boundtracejk} and the definition of the partition of unity, we can give the kernel of $g(H)$ by $\sum_{j,k\in\Z^d} K_{jk}(x,y)$.
Then for $x\in\R^d$ and $g\in\setg$, the spatial LDoS is defined as the diagonal of the kernel
\begin{equation}
\label{ldos}
\ldos(x;g) := \sum_{j,k\in\Z^d} K_{jk}(x,x) 
= \frac{1}{(2\pi)^d} \sum_{{\bf G} \in \RL_1^*\times \RL_2^*}e^{-i (G_1+G_2)\cdot x} \int_{\R^d} \big[g(\hH(\xi))\big]_{{\bf 0},{\bf G}} \dd\xi.
\end{equation}
The spatial LDoS is an observable used for studying applications such as electron density and local states in Moir\'{e} patterns \cite{simon2015operator}.
Since the lattices $\RL_1$ and $\RL_2$ are incommensurate, we observe that any $x \in \mathbb{R}^d$ can be described by a pair $(b_1,b_2) \in \Gamma_1\times \Gamma_2$, where $b_j = x + R_j \in \Gamma_j$ with a correct choice of $R_j \in \mathcal{R}_j$ (see Figure \ref{fig:config}, or \cite{cances2017generalized}).
We further provide an alternative description of the LDoS, with respect to the ``local" configuration $(b_1,b_2)\in \Gamma_1\times \Gamma_2$ as
\begin{equation}
\label{ldos:b1b2}
\widetilde{\ldos}(b_1,b_2;g) := \sum_{{\bf G} \in \RL_1^*\times \RL_2^*}e^{-i (b_1\cdot G_1+b_2 \cdot G_2)} 
\int_{\R^d} \big[g(\hH(\xi))\big]_{{\bf 0},{\bf G}} \dd\xi.
\end{equation}
We can trivially compute from the LDoS definitions that if $b_1(x)$ and $b_2(x)$ are the modulation of $x$ with respect to $\Gamma_1$ and $\Gamma_2$ respectively, then
\begin{equation*}
\ldos(x;g) = \widetilde{\ldos}(b_1(x),b_2(x),g).
\end{equation*}
Integration immediately yields the density of states formula in terms of LDoS:
\begin{align}
\label{tdl:b1b2}
\aTr\big(g(H)\big) = \frac{1}{|\Gamma_1||\Gamma_2|} \int_{\Gamma_1}\int_{\Gamma_2} \widetilde{\ldos}(b_1,b_2;g) \dd b_1\dd b_2.
\end{align}

\begin{figure}[!htb]
\centering
\includegraphics[width=7.5cm]{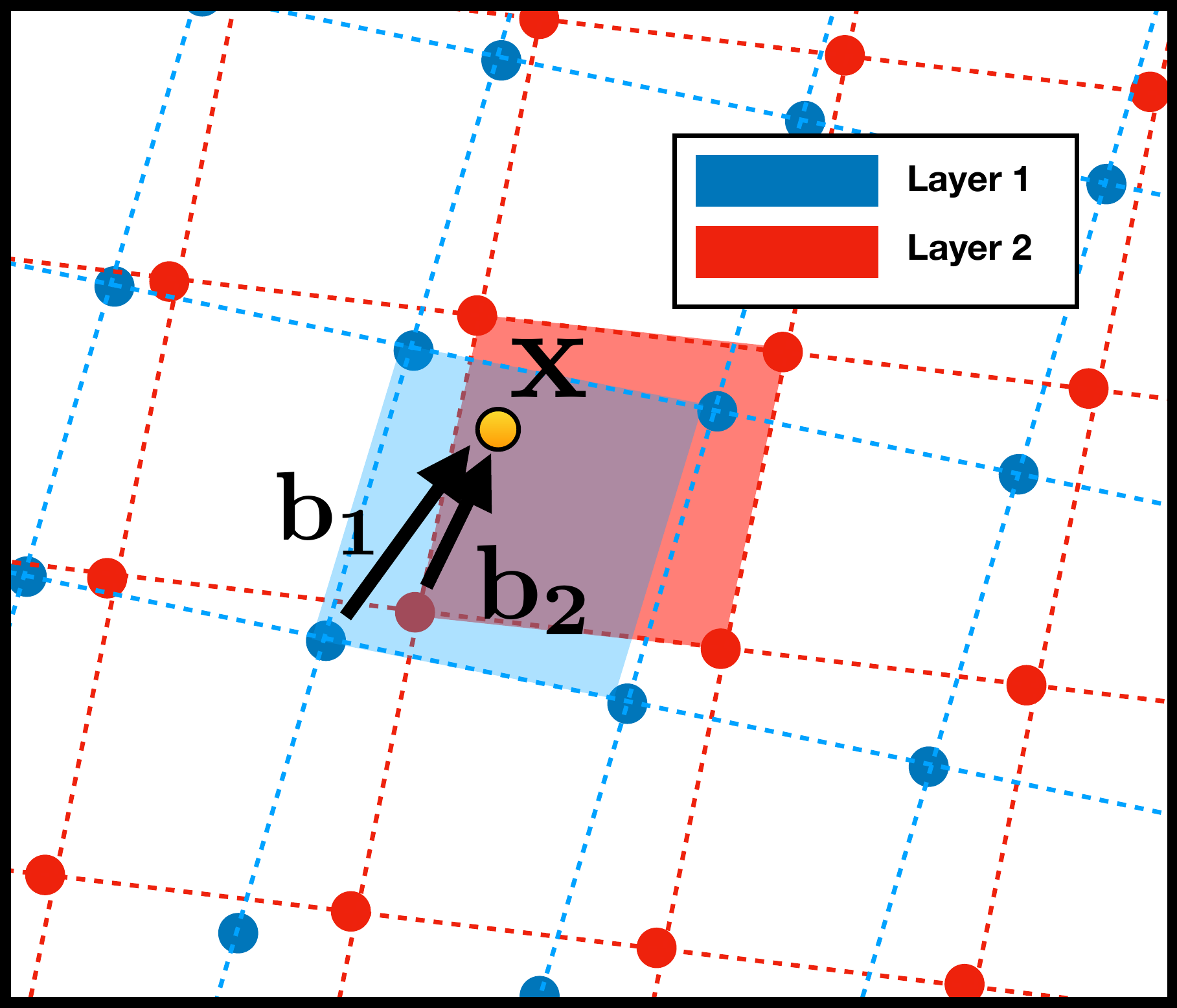}
\caption{The unit cells $\Gamma_j$ are shaded and their corresponding lattices $\mathcal{R}_j$ are displayed. Configuration vectors $b_1$ and $b_2$ corresponding to $x \in \mathbb{R}^2$ (central dot in both unit cells).}
\label{fig:config}
\end{figure}

The characterization of the DoS in Theorem \ref{theo:tdllimit} and \eqref{tdl:b1b2} can be viewed as a generalization of the results derived for incommensurate tight-binding models \cite{massatt2021electronic,massatt2018incommensurate,massatt2017electronic} to continuum models.
We mention that all the results can be extended to incommensurate systems with more than two layers. 
In particular, for a system with $M$ layers, one shall describe the $\hH(\xi)$ matrix in \eqref{Hhat:xi} with an $M$-dimensional reciprocal lattice and uses an $M$-fold multiple integral in the configuration representation \eqref{tdl:b1b2}.

\section{Planewave approximations and convergence analysis}
\label{sec:pw}
\setcounter{equation}{0}

In this section, we will propose two planewave methods to approximate the DoS of the incommensurate systems.
The first method is a natural approximation of the reciprocal space formula given in the previous section, with an efficient planewave cutoff and a trapezoidal rule in the reciprocal space.
We provide a rigorous analysis for the convergence rates with respect to the planewave cutoff parameters and the quadrature mesh size, which supports the efficiency of the numerical schemes.
The second method is a direct planewave discretization of the continuous Hamiltonian \eqref{H}, without any ``shifting" as in \eqref{Hhat:xi}.
Though generally less efficient than our first method in practice, it provides a significant improvement of the previous works with clever cutoff of the planewave vectors.
We also provide rigorous analysis for the convergence rates with the cutoff parameters.

\subsection{Planewave discretization based on LDoS in reciprocal space}
\label{sec:pw:discretization}

We have shown in Theorem \ref{theo:tdllimit} that the DoS is given by the integral of the LDoS in reciprocal space.
Then the approximation of DoS consists of two steps: 
(a) the planewave approximation of the LDoS in reciprocal space, i.e. $\big[g\big(\hH(\xi)\big)\big]_{\pmb{0},\pmb{0}}$; and 
(b) the numerical quadrature with respect to $\xi$ over $\R^d$.

We first consider the approximation of of $\big[g\big(\hH(\xi)\big)\big]_{\pmb{0},\pmb{0}}$.
Let $\Omega\subset\RL_1^* \times \RL_2^*$ give a finite dimensional truncation of the planewave vectors, we denote by $\hH^{\Omega}(\xi):\ell^2(\Omega)\to\ell^2(\Omega)$ the Hamiltonian with the same matrix elements given in \eqref{Hhat:xi}, but with the wave vectors $\bG,\bG'$ restricted on $\Omega$.
To get a concrete finite dimensional planewave approximation of $[g(\hH(\xi))]_{{\bf 0,0}}$, we introduce the following truncation for the planewave vectors in $\RL_1^* \times \RL_2^*$.
Let $W,~L >0$ and
\begin{eqnarray}
\label{set:cutoff}
\DD_{W,L} := \Big\{ \big(G_{1},G_{2}\big) \in\RL_1^* \times \RL_2^* ~:~ \big|G_{1}+G_{2}\big|\leq W ,~\big|G_{1}-G_{2}\big|\leq L \Big\}.
\end{eqnarray}
The degrees of freedom (i.e. the number of planewaves) in $\DD_{W,L}$ scales like $\mathcal{O}\big(L^d W^d\big)$.
Then the LDoS in reciprocal space $\big[g\big(\hH(\xi)\big)\big]_{{\bf 0,0}}$ will be approximated by $\big[g\big( \hat{H}(\xi)^{\DD_{W,L}} \big)\big]_{\bzero,\bzero}$ within our framework.
The reason we design the cutoff bounded by $W$ and $L$ is that the decays of the approximate errors could be significantly different with respect to the two parameters, which we will see in both theory (Section \ref{sec:pw:convergence}) and numerics (Section \ref{sec:comput}).
We give a schematic plot of the cutoffs for one dimensional systems (i.e. $d=1$) in Figure \ref{fig:D_WL}, in which the domain $\DD_{W,L}$ has very different widths.

\begin{figure}[!htb]
\centering
\includegraphics[width=9.5cm]{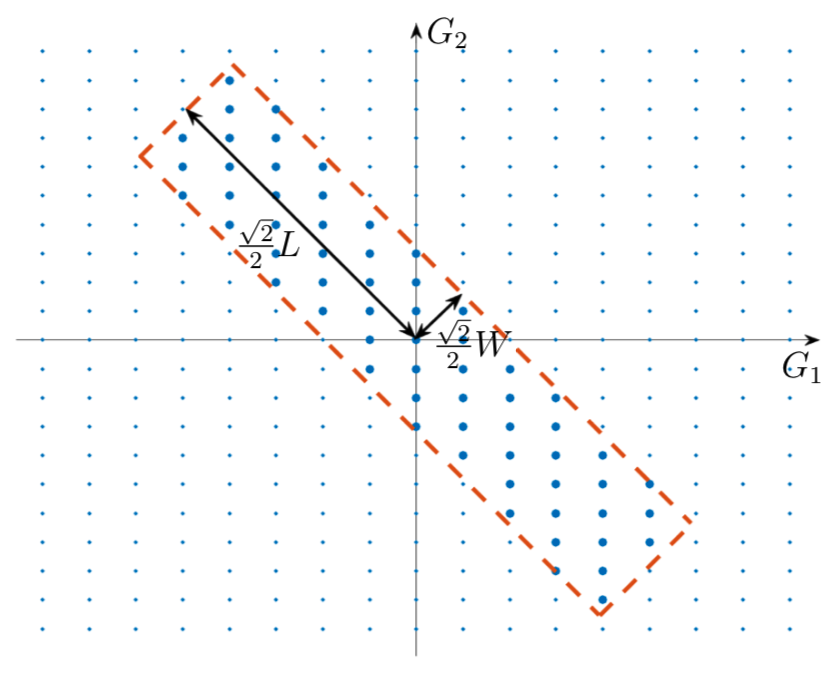}
\caption{Schematic plot of the domain $\DD_{W,L}$ in the reciprocal space.}
\label{fig:D_WL}
\end{figure}

In practical calculations, one can evaluate $\big[g\big(\hat{H}(\xi)^{\DD_{W,L}}\big)\big]_{\bzero,\bzero}$ directly by using the formula
\begin{align}
\label{ldos:eigen}
\big[g\big(\hat{H}(\xi)^{\DD_{W,L}}\big)\big]_{\bzero,\bzero} = \sum_{j} g(\lambda_j^{\xi})\big[\psi_j^{\xi}\big]_{\bzero}^2, 
\end{align}
where $(\lambda_j^{\xi}, \psi_j^{\xi})$ are eigenpairs of the matrix $\hat{H}(\xi)^{\DD_{W,L}}$.
Alternatively, one can approximate the matrix elements of $g\big(\hat{H}(\xi)^{\DD_{W,L}}\big)$ by the kernel polynomial methods, see e.g. \cite{massatt2018incommensurate,massatt2017electronic,silver1996kernel,weisse2006kernel}.

We next consider the integral of $\big[g\big(\hat{H}(\xi)^{\DD_{W,L}}\big)\big]_{\bzero,\bzero}$ with respect to $\xi$ in \eqref{tdl}.
Let $K\in\N^+$ and $h=W/K$.
We first construct a uniform quadrature mesh $\Kc_{h}$ on $[-W,W]^d$ with
\begin{equation}
\label{domainofsampling}
\Kc_{h}^W = \Big\{\big(x_{k_1}, \cdots, x_{k_d} \big) ~:~ -K\leq k_j < K,~1\leq j\leq d \Big\}
\quad{\rm and}~ x_j=jh.
\end{equation}
We then approximate the DoS defined in \eqref{tdl} by 
\begin{equation}
\label{discreteddos}
\DosK{W,L}{\DD_{W,L}}: = h^d \sum_{\xi\in \Kc_h^W} g\big( \hat{H}(\xi)^{\DD_{W,L}} \big)_{\bzero,\bzero}.
\end{equation}

The cost of the approximation \eqref{discreteddos} scales like $\mathcal{O}\big(K^d(LW)^{3d}\big)$ if \eqref{ldos:eigen} is used to evaluate the LDoS in reciprocal space, where the cubic scaling comes from the diagonalization of the matrices $\hat{H}(\xi)^{\DD_{W,L}}$.

The following theorem gives the convergence of the approximation \eqref{discreteddos}, which shows that the numerical errors decay exponentially fast with respect to the planewave cutoff and the quadrature mesh size. 
There is a numerically critical asymptotic difference between the $L$ and $W$ plane-wave truncation parameters when balancing errors, which is determined by the properties of $g$. 
The $L$-truncation has a convergence rate dependent on $g$'s regularity, and can be thought of as an ergodic sampling of wavenumbers in the $L\rightarrow \infty$ limit. 
On the other hand, the $W$-truncation corresponds to wavenumber cut-off. The Laplacian term sends large wavenumbers to high energy, and via this mechanism exponential localization of $g$ in energy yields an exponential convergence with respect to the $W$-truncation. 
Thus, as we will see in Section \ref{sec:comput} the $L$-truncation will typically need to be orders of magnitude larger than the $W$-truncation.
The proof of this theorem is given in Appendix \ref{sec:prooftheo:convergence_discretizedDos}.

\begin{theorem}
\label{theo:convergence_discretizedDos}
Let $g\in\setg$, then there exist positive constants $C$ and $c$ that do not depend on $L$, $W$, $h$ and $g$, such that
\begin{equation}
\label{error-dos-sampling}
\left\vert 
\DosK{W,L}{\DD_{W,L}} - \aTr\big(g(H)\big)
\right\vert 
\leq C
\big( \delta^{-2} e^{- c\delta L} + \delta^{-2} e^{- c \zeta W} + e^{- c\delta/h} \big).
\end{equation}
\end{theorem}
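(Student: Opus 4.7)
The plan is to decompose the total error by the triangle inequality into three natural sources: (i) a \emph{matrix-truncation error} from replacing $\hH(\xi)$ by $\hH^{\DD_{W,L}}(\xi)$ inside the integrand, (ii) a \emph{domain-truncation error} from restricting the outer integral from $\R^d$ to $[-W,W]^d$, and (iii) a \emph{quadrature error} from the trapezoidal rule on $[-W,W]^d$. Throughout, the main analytic tool is the Cauchy integral representation
\[
g(A) = \frac{1}{2\pi i}\oint_{\partial S_\delta} g(z)\,(z-A)^{-1}\,dz,
\]
combined with the resolvent bound $\|(z-A)^{-1}\|\leq 1/|\mathrm{Im}\,z|$ for self-adjoint $A$ and the decay $|g(z)|\leq Ce^{-\zeta|\mathrm{Re}\,z|}$ on $S_\delta$, which together make the contour integral absolutely convergent.

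For the matrix-truncation piece, I would apply the second resolvent identity to express $[g(\hH(\xi))]_{\bzero,\bzero} - [g(\hH^{\DD_{W,L}}(\xi))]_{\bzero,\bzero}$ as a contour integral forced to pass through some $\bG\notin\DD_{W,L}$. Splitting $\DD_{W,L}^{\rm c}$ into $\{|G_1+G_2|>W\}$ and $\{|G_1-G_2|>L\}$ treats the two parameters separately. For the $W$-part, the kinetic term $\tfrac12|\xi+G_1+G_2|^2$ forces the relevant spectrum far from the support of $g$, so the exponential decay of $g$ in $\mathrm{Re}\,z$ yields the factor $e^{-c\zeta W}$. For the $L$-part, I would iterate the resolvent (equivalently, run the Neumann series for the perturbation $v_1+v_2$) along a chain of potential matrix elements $\hat V_{j,G}$, using their exponential decay $e^{-\gamma|G|}$ together with the resolvent norm $\delta^{-1}$, to obtain $e^{-c\delta L}$. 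This is essentially a Combes--Thomas-type estimate for $g(\hH(\xi))$ and can be packaged using the locality lemma referenced in Section~\ref{sec:real}. After integrating in $\xi$, the resulting bound has the form $\delta^{-2}(e^{-c\delta L}+e^{-c\zeta W})$; the prefactor $\delta^{-2}$ emerges naturally from the two resolvent factors produced by the second resolvent identity together with the length of the contour.

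For the domain-truncation piece (ii), outside $[-W,W]^d$ the operator $\hH(\xi)$ is bounded below by approximately $\tfrac12|\xi|^2-\|v_1+v_2\|_{\infty}$, so the spectral decay of $g$ gives $|[g(\hH(\xi))]_{\bzero,\bzero}|\leq Ce^{-c\zeta|\xi|^2}$, and integration over $|\xi|>W$ contributes much less than $e^{-c\zeta W}$. For the quadrature error (iii), the truncated integrand $F(\xi):=[g(\hH^{\DD_{W,L}}(\xi))]_{\bzero,\bzero}$ admits an analytic extension in $\xi$ to a complex strip of width proportional to $\delta$, since the perturbation $\xi\mapsto\xi+i\eta$ moves the spectrum of $\hH^{\DD_{W,L}}(\xi)$ in a controlled way that keeps the Cauchy contour inside $S_\delta$. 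Standard trapezoidal-rule estimates for analytic functions on a strip then yield the $e^{-c\delta/h}$ bound.

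The principal technical obstacle lies in step (i): producing the two distinct exponential decay rates, one governed by $\delta$ and the other by $\zeta$, from a unified resolvent analysis. Decay in the $|G_1-G_2|$ direction has no ellipticity help — translating along $G_1-G_2$ does not push the kinetic energy up — so it must be extracted purely from the potential coefficients via an iterated Neumann-series argument, with the resolvent norm $\delta^{-1}$ controlling the decay rate. Decay in the $|G_1+G_2|$ direction, by contrast, is elliptic in nature and inherits the rate from $g$'s spectral decay $\zeta$. Cleanly separating these two mechanisms while keeping constants uniform in $\xi$, and correctly tracking the $\delta$-dependence through the Cauchy contour bookkeeping, is the core technical burden of the proof.
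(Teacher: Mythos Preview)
Your decomposition and identification of the three error sources match the paper's proof closely: the paper splits the error into the same three pieces (its $T_3$, $T_1$, $T_2$ correspond to your (i), (ii), (iii), up to reordering). Your analysis of the matrix-truncation piece (i) is right in spirit, and you correctly locate the core technical burden in separating the $\zeta$-driven $W$-decay (elliptic) from the $\delta$-driven $L$-decay (Combes--Thomas). The paper implements the $W$-part via a ``resolvent decomposition'' on energy shells $\St_k(z)$ with a Schur-complement iteration (Lemma~\ref{lemma:locality:reciprocal:GG}), which is a concrete realization of your heuristic that large $|G_1+G_2|$ pushes the spectrum away from the support of $g$.

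There is one organizational subtlety you should correct in step (iii). Applying the exponential trapezoidal-rule bound directly on the \emph{finite} box $[-W,W]^d$ does not yield $e^{-c\delta/h}$: the Trefethen--Weideman theorem requires integration over all of $\R^d$ for a function analytic in a strip and decaying at infinity, whereas on a bounded interval the composite trapezoidal rule incurs Euler--Maclaurin boundary terms of order $h^2$. The paper handles this by reversing the order of your (ii) and (iii): it first compares $h^d\sum_{\xi\in\Kc_h^W}$ to the \emph{full-lattice} sum $h^d\sum_{\xi\in h\Z^d}$ (the tail being $O(\delta^{-2}e^{-c\zeta W})$ by the integrand decay \eqref{decay-gH-xi-ext}), and only then invokes the trapezoidal theorem on $\R^d$ to compare the full-lattice sum to $\int_{\R^d}$. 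Remark~\ref{remark:h} makes exactly this point. Your version is repaired by the same reordering, so this is not a gap in the essential argument, only in how the triangle inequality is organized.
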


\begin{remark}
\label{remark:h}
We mention that the exponential decay with respect to the quadrature mesh size $h$ comes from the trapezoidal rule for analytic functions (see the proof in Appendix \ref{sec:prooftheo:convergence_discretizedDos}).
This requires an integration over the whole space $\R^d$, and can be clearly observed when the cutoff $W$ is sufficiently large (see numerical results in Section \ref{sec:comput}).
However, when the cutoff $W$ is fixed at a small value, one will observe an $\mathcal{O}(h^2)$ convergence with respect to the mesh size since the integrand loses the analyticity without the tails outside $[-W,W]^d$ (see numerical results in Section \ref{sec:comput}).
In practice, we will need to balance the errors from the cutoff $W$ and the quadrature mesh size $h$.
\end{remark}

\subsection{Planewave discretization of the Hamiltonian without shifting}
\label{sec:pw:convergence}

In this subsection, we will consider an alternative planewave approximation of the DoS, which is more of a direct planewave discretization of the continuous problem \eqref{H}.
This type of construction was first proposed in \cite{zhou2019plane}, see also earlier works on quasiperiodic problems \cite{jiang2014numerical}.
We will show that such a direct approach can also converge to the thermodynamic limit of DoS derived in Theorem \ref{theo:tdllimit}, which will provide much more efficient numerical schemes than previous works with clever cutoffs of the planewave vectors.

Define the discrete Hamiltonian $\hH:\C^{\RL_1^* \times \RL_2^*} \rightarrow \C^{\RL_1^* \times \RL_2^*}$ in the reciprocal space 
such that
\begin{equation}
\label{Hpw}
\hH_{\bG,\bG'} = \frac{1}{2}|G_1+G_2|^2\delta_{G_1G_1^{'}} \delta_{G_2G_2^{'}} + \hat{V}_{1,G_1-G_1^{'}} \delta_{G_2G_2^{'}} + \hat{V}_{2,G_2-G_2^{'}} \delta_{G_1G_1^{'}} 
\end{equation}
for $\bG=(G_1,G_2),~\bG'=(G_1^{'},G_2^{'}) \in \RL_1^* \times \RL_2^*$.
Note that $\hH=\hH(\bzero)$ with $\hH(\xi)$ defined in \eqref{Hhat:xi}.
With the Hamiltonian in \eqref{Hpw} and the cutoff scheme given by \eqref{set:cutoff}, the DoS given in \eqref{tdl} can be approximated by
\begin{align}
\label{def:dos-reciprocalWL}
\pwdosWL{\hH^{\DD_{W,L}}} :=\frac{|\Gamma_1^*||\Gamma_2^*|}{S_{d,L}}\Tr\Big(g\big(\hH^{\DD_{W,L}}\big)\Big),
\end{align}
where $S_{d,L}$ denotes the volume of a $d$-dimensional ball with diameter $L$.
Here, the pre-factor $|\Gamma_1^*||\Gamma_2^*|/S_{d,L}$ is needed such that the DoS can be correctly ``averaged" to unit volume.

In practice, the trace $\Tr\big(g(\hH^{\DD_{W,L}})\big)$ on the right-hand side of \eqref{def:dos-reciprocalWL} can be evaluated once the finite dimensional Hamiltonian $\hH^{\DD_{W,L}}$ is obtained.
One can either solve the matrix eigenvalue problem to get the eigenvalues $\{\lambda_j\}$ of $\hH^{\DD_{W,L}}$ and take the sum $\Tr\big(g(\hH^{\DD_{W,L}})\big)=\sum_j g(\lambda_j)$,
or use the kernel polynomial methods to calculate the diagonal of the matrix $g\big(\hH^{\DD_{W,L}}\big)$.
The cost of the approximation \eqref{def:dos-reciprocalWL} scales like $\mathcal{O}\big((LW)^{3d}\big)$ if the matrix $\hH^{\DD_{W,L}}$ is diagonalized to obtain all the eigenvalues $\{\lambda_j\}$.
The following theorem justifies the convergence of the planewave approximation \eqref{def:dos-reciprocalWL} and provides the convergence rates with respect to the cutoffs $L$ and $W$.
The proof is given in Appendix \ref{sec:prooftheo:WLconvergence-dos}.

\begin{theorem}
\label{theorem:WLconvergence-dos}
Let $g\in \setg$, then there exist positive constants $C$ and $c$ independent of $L$, $W$ and $g$, such that
\begin{equation}
\label{errordos-reciprocal}
\left\vert 
\pwdosWL{\hH^{\DD_{W,L}}} - \aTr\big(g(H)\big) 
\right\vert \leq
\pmb{\phi}(L) + C \delta^{-2} e^{-c \zeta W} ,
\end{equation}
where $\pmb{\phi}$ is a decreasing function with $\pmb{\phi}(L)\rightarrow 0$ as $L\rightarrow\infty$.
\end{theorem}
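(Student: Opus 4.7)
The plan is to use the reciprocal-space identity
$\aTr(g(H))=\int_{\R^d}[g(\hH(\xi))]_{\bzero,\bzero}\dd\xi$
furnished by Theorem~\ref{theo:tdllimit}, and to interpret $\pwdosWL{\hH^{\DD_{W,L}}}$ as a Riemann-sum-like discretisation of this integral. The key algebraic ingredient is the translation relation
\[
[\hH]_{\bG^0+\bG,\bG^0+\bG'}=[\hH(G_1^0+G_2^0)]_{\bG,\bG'},
\qquad \bG^0=(G_1^0,G_2^0)\in\RL_1^*\times\RL_2^*,
\]
which is immediate from~\eqref{Hpw}--\eqref{Hhat:xi}. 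After restriction to $\DD_{W,L}$ and writing $\xi_0:=G_1^0+G_2^0$, this gives
\[
[g(\hH^{\DD_{W,L}})]_{\bG^0,\bG^0}
=\bigl[g\bigl(\hH(\xi_0)^{\DD_{W,L}-\bG^0}\bigr)\bigr]_{\bzero,\bzero},
\]
so that each diagonal entry of the truncated Hamiltonian coincides with the reciprocal-LDoS at the shifted wave vector $\xi_0$, computed with the operator truncated to a shifted window.

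I would then split $\pwdosWL{\hH^{\DD_{W,L}}}-\aTr(g(H))$ into three contributions: a \emph{locality error} from replacing $\bigl[g\bigl(\hH(\xi_0)^{\DD_{W,L}-\bG^0}\bigr)\bigr]_{\bzero,\bzero}$ with $[g(\hH(\xi_0))]_{\bzero,\bzero}$; a \emph{Riemann-sum error} from replacing $\frac{|\Gamma_1^*||\Gamma_2^*|}{S_{d,L}}\sum_{\bG^0\in\DD_{W,L}}[g(\hH(\xi_0))]_{\bzero,\bzero}$ by $\int_{|\xi|\leq W}[g(\hH(\xi))]_{\bzero,\bzero}\dd\xi$; and a \emph{tail error} $\int_{|\xi|>W}[g(\hH(\xi))]_{\bzero,\bzero}\dd\xi$. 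For the locality piece I would use a Helffer--Sj\"ostrand / contour representation of $g(\hH(\xi))$, available because $g\in\setg$ extends analytically to $S_\delta$, together with a Combes--Thomas resolvent bound to produce the off-diagonal decay $|[g(\hH(\xi))]_{\bG,\bG'}|\leq C\delta^{-2}e^{-c\delta|\bG-\bG'|}$. This bounds the per-$\bG^0$ locality error by $C\delta^{-2}e^{-c\delta\,\mathrm{dist}(\bG^0,\partial\DD_{W,L})}$; after summing and dividing by $S_{d,L}/(|\Gamma_1^*||\Gamma_2^*|)\sim L^d$, the contributions from the $|G_1-G_2|=L$ face yield an $O(1/L)$ remainder absorbed into $\pmb{\phi}(L)$, while contributions near the $|G_1+G_2|=W$ face are absorbed into $e^{-c\zeta W}$ by additionally using that the reciprocal LDoS is already exponentially small there. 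The tail error is controlled by the same contour argument together with the quadratic kinetic term $\tfrac12|\xi+G_1+G_2|^2$ on the diagonal of $\hH(\xi)$, which forces $[g(\hH(\xi))]_{\bzero,\bzero}$ to decay super-polynomially in $|\xi|$ and hence yields $C\delta^{-2}e^{-c\zeta W}$.

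The main obstacle is the Riemann-sum step. By reciprocal incommensurability, the map $(G_1^0,G_2^0)\mapsto G_1^0+G_2^0$ is injective from $\RL_1^*\times\RL_2^*$ into $\R^d$ with dense image, and a Weyl-type equidistribution argument gives, for any continuous $f$,
\[
\frac{|\Gamma_1^*||\Gamma_2^*|}{S_{d,L}}\sum_{\bG^0\in\DD_{W,L}}f(G_1^0+G_2^0)\longrightarrow \int_{|\xi|\leq W}f(\xi)\dd\xi \qquad \text{as } L\to\infty.
\]
Applied to $f(\xi)=[g(\hH(\xi))]_{\bzero,\bzero}$, which is continuous in $\xi$ through the analytic functional calculus, this delivers a qualitative remainder $\pmb{\phi}(L)\to 0$; however, without Diophantine assumptions on the pair $(\RL_1^*,\RL_2^*)$ the equidistribution rate cannot be quantified in general, which is precisely why the theorem asserts only the existence of a decreasing $\pmb{\phi}$ with $\pmb{\phi}(L)\to 0$, in sharp contrast with the explicit exponential dependence on $W$. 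Combining the three bounds then yields~\eqref{errordos-reciprocal}.
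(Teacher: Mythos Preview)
Your proposal is correct and follows essentially the same route as the paper: the same shift identity (stated there as Lemma~\ref{lem:shiftrelation}), the same three-way split into a locality error, an equidistribution/Riemann-sum error, and a tail error, and the same Weyl/ergodicity argument (Lemma~\ref{lemma:localgeometries}) producing the unquantified $\pmb{\phi}(L)$. The only refinement the paper adds is a sharpened locality estimate (Lemma~\ref{lemma:locality:reciprocal:GG}) that separates the $L$- and $W$-directions with rates $\delta$ and $\zeta$ respectively, whereas you recover the $\zeta$ rate near the $|G_1+G_2|=W$ face more informally by invoking the smallness of the reciprocal LDoS there; both organisations lead to the same bound.
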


\begin{remark}[convergence rate of $\pmb{\phi}(L)$]
\label{remark:oL}
We show in Theorem \ref{theorem:WLconvergence-dos} that as $L\rightarrow\infty$, the approximation \eqref{def:dos-reciprocalWL} can converge to the DoS of the system.
In fact, the term $\pmb{\phi}(L)$ could behave like $\mathcal{O}(L^{-1})$ in practical calculatios of many incommensurate systems (see numerical experiments Section \ref{sec:comput}).
We also discuss in the proof (see Appendix \ref{sec:prooftheo:WLconvergence-dos}) about where the $L^{-1}$ convergence rate comes from and why it can not be justified rigorously.
For discussions on computational cost and numerical experiments in the rest of this paper, we will view $\pmb{\phi}(L)$ as $\mathcal{O}(L^{-1})$.

Then Theorem \ref{theorem:WLconvergence-dos} provides an explicit convergence rate of the planewave approximation \eqref{def:dos-reciprocalWL} of DoS \eqref{tdl}.
We see that the approximate errors decay exponentially fast with respect to $W$ but only decay at most like $\mathcal{O}(L^{-1})$ with respect to $L$.
Therefore, to balance the numerical errors and achieve optimal computational cost, one may need a much larger $L$ than $W$ in practical calculations, see also the shape of $\DD_{W,L}$ in Figure \ref{fig:D_WL}.
By taking $W\ll L$, one can save the degrees of freedom significantly compared with a brute planewave cutoff (e.g. with $|G_1|^2+|G_2|^2\leq L^2$ as used in \cite{zhou2019plane}).
\end{remark}

\begin{remark}[intuition behind the planewave cutoffs]
\label{remark:intution}
Intuitively, increasing $|G_1+G_2|$ (truncated by $W$) corresponds to raising the planewave frequencies, and hence can achieve fast convergence for smooth problems; while increasing $|G_1-G_2|$ (truncated by $L$) corresponds to traversing the reciprocal space, and the convergence of which is determined by the ergodicity and spectral resolution (see the convergence rate in Lemma \ref{lemma:localgeometries}).
These intuitions can be reflected by the proof of this theorem.
\end{remark}

\begin{remark}[comparison between the two algorithms]
\label{remark:comparision}
We make a comparison between the two planewave methods \eqref{discreteddos} and \eqref{def:dos-reciprocalWL} introduced in this section.
As we see from Theorem \ref{theo:convergence_discretizedDos} and \ref{theorem:WLconvergence-dos}, the approximate error of \eqref{discreteddos} decays exponentially fast with respect to the planewave cutoffs while the approximate error of \eqref{def:dos-reciprocalWL} decays at most inversely with respect to the cutoff $L$, so the matrix sizes of those in \eqref{discreteddos} could be significantly smaller than that in \eqref{def:dos-reciprocalWL} to achieve the same accuracy.
Therefore, \eqref{discreteddos} provides a more efficient numerical approximation scheme than \eqref{def:dos-reciprocalWL}, even though in \eqref{discreteddos} one needs to handle many matrices $\hat{H}(\xi)^{\DD_{W,L}}$ with $\xi\in \Kc_{h}^W$.
Moreover, the formula \eqref{discreteddos} can be naturally parallelized as the LDoS for different $\xi$ on the quadrature mesh points can be evaluated in parallel.
The advantage of the formula \eqref{def:dos-reciprocalWL} lies in that it builds a more direct connection to the original continuous problem and the supercell methods \cite{koda2016,komsa2013electronic}.
\end{remark}

\section{Numerical experiments}
\label{sec:comput}
\setcounter{equation}{0}

In this section, we will present some numerical experiments that simulate the DoS of some 1D and 2D incommensurate systems.
All simulations are implemented in open-source {\tt Julia} packages {\tt incommensurate{\_}Pw.jl} \cite{git:Incommensurate_Pw},
and performed on a PC with Intel Core i7-CPU (2.2 GHz) with 32GB RAM.

We test the convergence of numerical approximations of the DoS with some given $g\in\setg$, and show the decay of numerical errors with respect to different parameters.
The results obtained by using sufficiently large discretization parameters (i.e. large $L,W$ and small $h$ in \eqref{discreteddos}) are taken to be the exact solutions.

{\bf Example 1.}
{\rm (1D incommensurate system)}
Considering the following 1D Hamiltonian
\begin{equation}
H= -\frac{1}{2}\frac{\dd^2}{\dd x} + v_1(x) + v_2(x)
\end{equation} 
where the potentials $v_1$ and $v_2$ given by
\begin{equation}
\label{v_numerics}
v_1(x)= \sum_{G_1\in \RL^*_1} e^{-\gamma|G_1|^2} e^{iG_1\cdot x} 
\quad  {\rm and}  \quad 
v_2(x)= \sum_{G_2\in \RL^*_2} e^{-\gamma |G_2|^2} e^{iG_2 \cdot x} ,
\quad x\in\R .
\end{equation}
We take $\gamma=0.01$ and the reciprocal lattices $\RL^*_1=2\pi/L_1 \Z$ and $\RL^*_2=2\pi/L_2 \Z$, with the lattice constants $L_1=\sqrt{5}-1$ and $L_2=2$. 
To compute the DoS that corresponds to the total energy under a Fermi-Dirac distribution, we choose $g$ by
\begin{equation}
\label{gaussian-fermi} 
g_{\beta,\mu}(\lambda) = \frac{\lambda}{1+e^{\beta(\lambda-\mu)}} , 
\end{equation}
where $\mu=10.0$ is a fixed chemical potential and $\beta$ corresponds to the inverse temperature.
We will compare the results with temperature at various values.
Note that the operator $H$ is bounded from below, so it is not necessary to consider the behavior of $g_{\beta,\mu}$ as $x\to -\infty$. 
Therefore, we can view $g$ as a function in $\setg$ with both parameters $\delta$ and $\zeta$ depending on $\beta$.

In the low-temperature regime (corresponding to large $\beta$), $g$ will vanish quickly as $x$ increases. 
Then the convergence of numerical approximations for the DoS will mainly be affected by how far the singularity of $g$ is away from the real axis (that is, by the parameter $\delta$, which will become smaller as $\beta$ increases).
In the high-temperature regime (corresponding to small $\beta$), $g$ will be sufficiently smooth, so the decay of $g$ will play a more important role for the convergence of numerical approximations (that is, by the parameter $\zeta$, which will become smaller as $\beta$ decreases).

We first perform the simulations by the numerical scheme \eqref{discreteddos}. 
We show the decay of numerical errors with respect to the cutoffs $L$ and $W$ in Figure \ref{fig:ex1:WL} (left and middle panels), from which we observe exponential convergence rates with respect to both of the cutoffs.
The various behaviors with different choices of $\beta$ also match our expectation on how the convergence rates depend on $\delta$ and $\zeta$.
We then test the convergence of the planewave approximations by the numerical scheme \eqref{def:dos-reciprocalWL}.
We observe from Figure \ref{fig:ex1:WL} (right panel) that the numerical errors have first order decay with respect to the cutoff $L$, and significantly faster decay with respect to the cutoff $W$.
The results fit our theory in Theorem \ref{theo:convergence_discretizedDos} and Theorem \ref{theorem:WLconvergence-dos} very well.

Finally, we test the convergence with respect to the quadrature mesh size $h$ for the numerical scheme \eqref{discreteddos}.
We show the decay of numerical errors in Figure \ref{fig:ex1:h} for different choices of $W$.
We observe an exponentially fast decay with respect to $h$ when $W$ is large, and a second order decay with respect to $h$ when $W$ is small (where the reference is taken by using the same $W$ and a sufficiently small $h$).
These observations match perfectly with our theoretical predictions in Theorem \ref{theo:convergence_discretizedDos} and Remark \ref{remark:h}.

\begin{figure}[!htb]
\centering
\includegraphics[width=4.8cm]{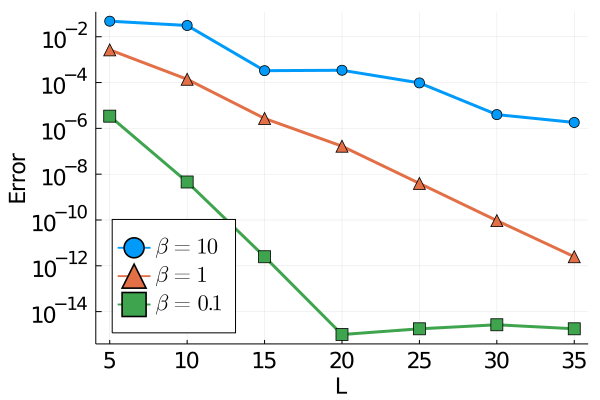}
\hskip 0.15cm
\includegraphics[width=4.8cm]{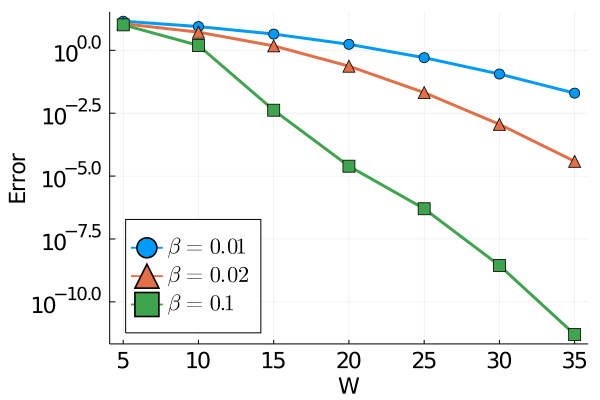}
\hskip 0.15cm
\includegraphics[width=4.9cm]{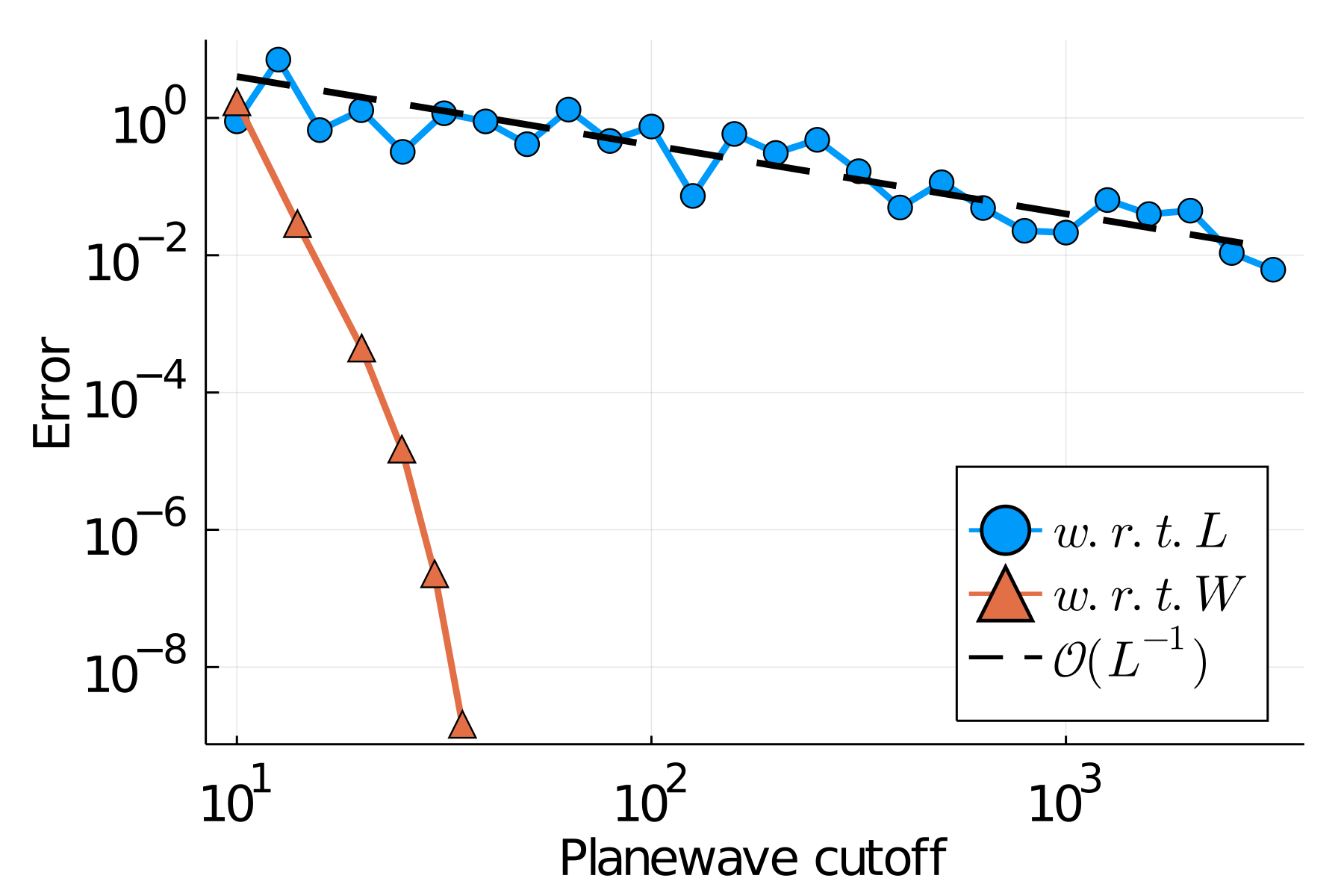}
\caption{(Example 1) Error decay with respect to the planewave cutoffs.
Left: convergence w.r.t. $L$ by \eqref{discreteddos}. 
Middle: convergence w.r.t. $W$ by \eqref{discreteddos}. 
Right: convergence by \eqref{def:dos-reciprocalWL}.}
\label{fig:ex1:WL}
\end{figure}

\begin{figure}[!htb]
\centering
\includegraphics[width=5.0cm]{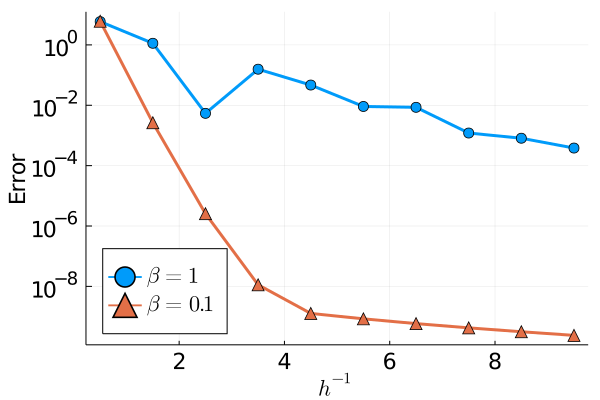}
\hskip 1.0cm
\includegraphics[width=5.0cm]{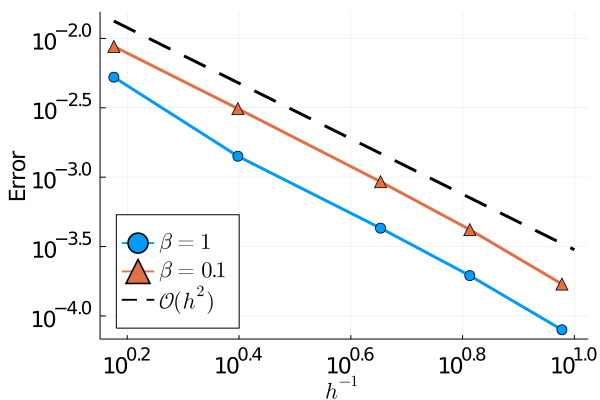}
\caption{(Example 1) Error decay with respect to the quadrature mesh size $h$ by scheme \eqref{discreteddos}. Left: $W=25.0$. Right: $W=5.0$.}
\label{fig:ex1:h}
\end{figure}

{\bf Example 2.} 
{\rm (2D incommensurate system)} 
Consider a two-dimensional incommensurate system obtained by stacking two square lattices together, in which one layer is rotated by an angle $\theta=\pi/10$ with respect to the other. 
More precisely, we take $\mathcal{R}_1= A_1\mathbb{Z}^2$ and $\mathcal{R}_2= A_2\mathbb{Z}^2$ with
\begin{equation*}
	A_1=2\left[ 
	\begin{gathered}
	\begin{matrix}
	1 &0\\0 & 1
	\end{matrix}
	\end{gathered}\right] 
	\qquad{\rm and}\qquad
	A_2=2\left[ 
	\begin{gathered}
	\begin{matrix}
	\cos(\theta) &\cos(\theta+\frac{\pi}{2})  \\\sin(\theta) & \sin(\theta+\frac{\pi}{2}) 
	\end{matrix}
	\end{gathered}\right].	 		 
\end{equation*}
The potentials are given in the same form of \eqref{v_numerics} and the DoS are evaluated by the same test function $g$ as given in \eqref{gaussian-fermi}.

We perform simulations by using the numerical schemes \eqref{discreteddos} and \eqref{def:dos-reciprocalWL}, respectively.
The decay of numerical errors with respect to the planewave cutoffs $L$ and $W$ are shown in Figure \ref{fig:ex2:WL}, which are again consistent with our theory.
Moreover, we present the convergence with respect to the mesh size $h$ for the first scheme \eqref{discreteddos} in Figure \ref{fig:ex2:h}, with a large and a small cutoff of $W$ respectively. 
All these results match perfectly with our theoretical perspective.

\begin{figure}[!htb]
\centering
\includegraphics[width=4.8cm]{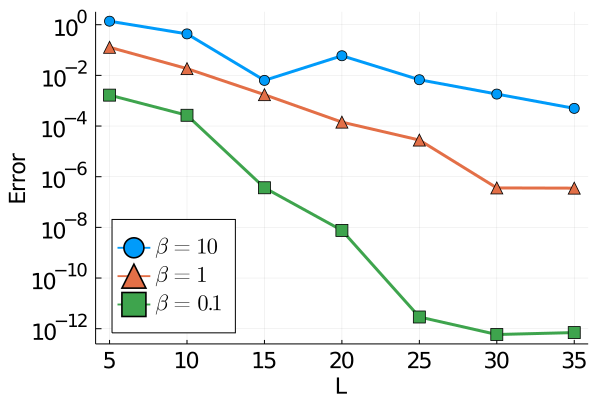}
\hskip 0.15cm
\includegraphics[width=4.8cm]{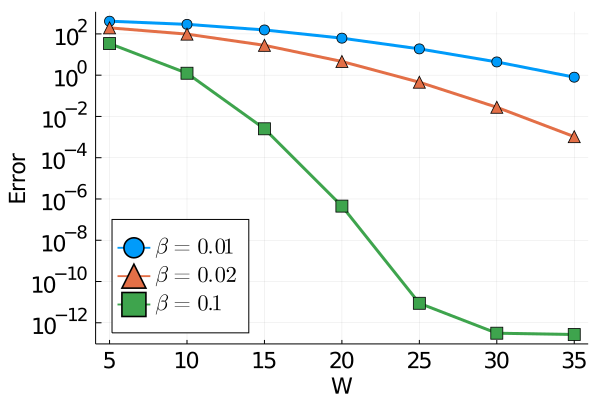}
\hskip 0.15cm
\includegraphics[width=4.9cm]{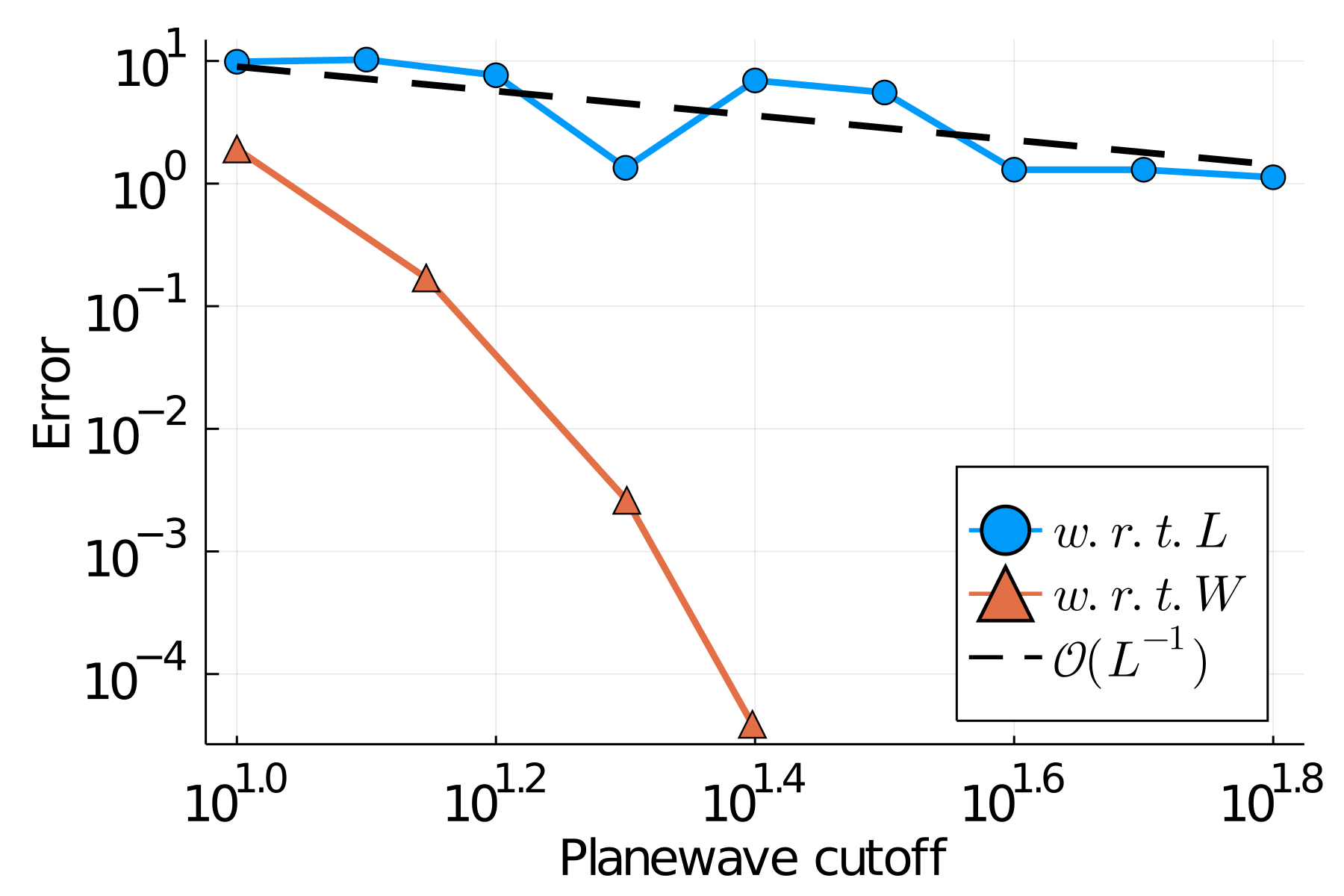}
\caption{(Example 2) Error decay with respect to the planewave cutoffs. 
Left: convergence w.r.t. $L$ by \eqref{discreteddos}. 
Middle: convergence w.r.t. $W$ by \eqref{discreteddos}. 
Right: convergence by \eqref{def:dos-reciprocalWL}.}
\label{fig:ex2:WL}
\end{figure}

\begin{figure}[!htb]
\centering
\includegraphics[width=5.0cm]{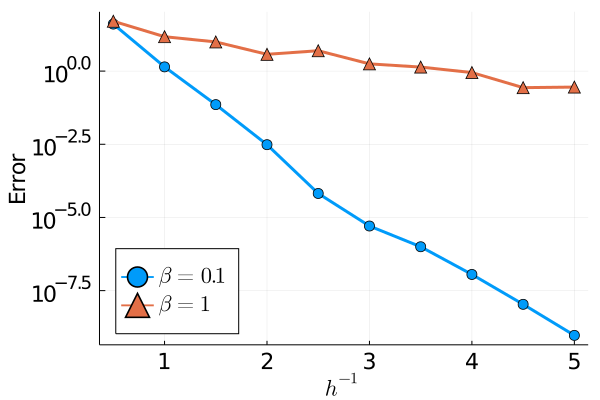}
\hskip 1.0cm
\includegraphics[width=5.0cm]{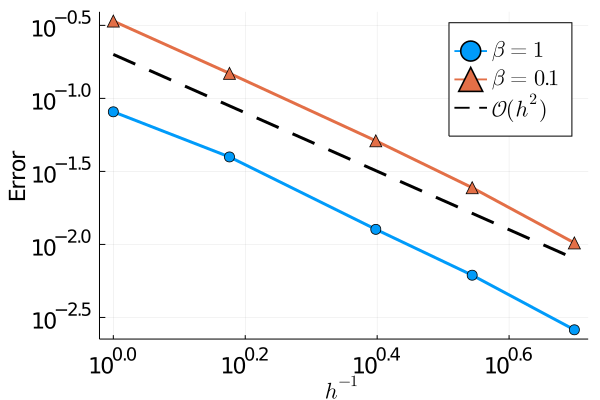}
\caption{(Example 2) Error decay with respect to the quadrature mesh size $h$ by scheme \eqref{discreteddos}. Left: $W=30.0$. Right: $W=5.0$.}
\label{fig:ex2:h}
\end{figure}

\section{Conclusions}
\label{sec:conclusions}

In this paper, we provide a generic framework to study the continuous electronic structure models for incommensurate systems.
We show that the physical observables formulated by the Hamiltonian DoS are well defined and can be simulated efficiently by some planewave approximations.
For future perspective, this work could provide a road map to construct theory and numerical algorithm for more general physical observables, such as conductivity, optical response, and topological Chern number.

In particular, we characterize the DoS of the systems through a planewave description of the operator kernel, and provide explicit DoS representations with respect to the LDoS in real space, reciprocal space, and configuration space. 
Based on the reciprocal space LDoS representation, we develop efficient numerical methods and justify the fast convergence rates of the approximations by rigorous analysis and numerical experiments.
We further provide an alternative planewave approximation scheme, which while less efficient builds a close connection to the continuous Schr\"{o}dinger operator \eqref{H} and earlier works \cite{zhou2019plane}.
In our future work, we will extend to more complex nonlinear models such as density functional theory.

\appendix

\section{Justification and decay property of $g(\hH({\xi}))$}
\label{sec:proof:lemma:gH00}
\setcounter{equation}{0}

We will first justify the elements of $g\big(\hH({\xi})\big)$ as thermodynamic limit in the reciprocal space.
Let $\Omega\subset\RL_1^* \times \RL_2^*$ be a finite set, we denote by $\hH({\xi})^{\Omega}\in\C^{\Omega\times\Omega}$ the ``truncated" Hamiltonian such that its matrix elements are the same as \eqref{Hhat:xi} but the wave vectors $\bG,\bG'$ are restricted to $\Omega$.

\begin{lemma}
\label{lemma:locality:reciprocal}
Let $\xi\in\R^d$, $g\in \setg$ and $\bG,\bG'\in \RL_1^*\times \RL_2^*$.
Let $R>0$ be large enough such that $\max\big\{|\bG|,|\bG'|\big\}<R/2$, and $\Omega_R:= B_R\cap (\RL_1^*\times \RL_2^*)$. 
Then the limit
\begin{eqnarray}
\label{ldos_tdl}
\lodoss{\hH(\xi)}{\bG}{\bG'} := \lim_{R \to \infty}\lodoss{\hH(\xi)^{\Omega_{R}}}{\bG}{\bG'}
\end{eqnarray}
exists.
Moreover, there exist constants $C,c>0$ depending on $\gamma$ (given in \eqref{decayVj}), such that
\begin{eqnarray}
\label{locality:reciprocal}
\left\vert \lodoss{\hH(\xi)^{\Omega_{R}}}{\bG}{\bG'} - \lodoss{\hH(\xi)}{\bG}{\bG'} \right\vert \leq C \delta^{-2} e^{-c\delta R} .
\end{eqnarray}
\end{lemma}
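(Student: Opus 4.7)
The plan is to combine a contour representation of $g(\hH(\xi))$ with a Combes--Thomas exponential decay estimate for the resolvent and the second resolvent identity. Because $g \in \setg$ is analytic in the strip $S_\delta$ and decays exponentially along the real axis, and because $\hH(\xi)$ (essentially self-adjoint on finitely supported sequences in $\ell^2(\RL_1^*\times\RL_2^*)$ thanks to the positive diagonal kinetic term and the bounded exponentially decaying hopping) and the block-diagonal extension $\widetilde{H}_R$ of $\hH(\xi)^{\Omega_R}$ to the full $\ell^2$ space (truncated Hamiltonian on $\Omega_R$, pure kinetic diagonal on $\Omega_R^c$) are both self-adjoint, I will write
\[
\bigl[g(\hH(\xi))\bigr]_{\bG,\bG'} = \frac{1}{2\pi i}\int_\Gamma g(z)\bigl[(z-\hH(\xi))^{-1}\bigr]_{\bG,\bG'}\,\dd z
\]
on the horizontal contour $\Gamma = \{t \pm i\delta/2 : t\in\R\}$, and analogously for $\widetilde{H}_R$. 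On $\Gamma$ both resolvents are bounded in operator norm by $2/\delta$, the decay of $g$ makes the integrals absolutely convergent, and the block structure yields $\bigl[g(\widetilde{H}_R)\bigr]_{\bG,\bG'} = \bigl[g(\hH(\xi)^{\Omega_R})\bigr]_{\bG,\bG'}$ for $\bG,\bG' \in \Omega_R$, so the problem reduces to controlling the difference of the two resolvent matrix elements.

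The key step is the Combes--Thomas estimate. Using $|\hat V_{j,G}| \leq C e^{-\gamma|G|}$, the similarity transform $(M_\eta\psi)(\bG) := e^{\eta\,\hat n\cdot \bG}\psi(\bG)$, for a unit vector $\hat n$ and small $|\eta|$, changes $\hH(\xi)$ by a bounded operator of norm $\mathcal{O}(|\eta|)$; the diagonal kinetic term $|\xi+G_1+G_2|^2/2$ is invariant under $M_\eta$, so only the exponentially decaying hopping is perturbed (a Schur-test bound suffices once $|\eta|<\gamma/2$). Taking $|\eta| = c_0\delta$ with $c_0$ small enough (depending on $\gamma$) and applying a Neumann series gives $\|(z - M_\eta\hH(\xi) M_{-\eta})^{-1}\| \leq C/\delta$ uniformly on $\Gamma$; conjugating back and aligning $\hat n$ with $\bG - \bG'$ yields
\[
\bigl|\bigl[(z-\hH(\xi))^{-1}\bigr]_{\bG,\bG'}\bigr| \leq C\delta^{-1} e^{-c\delta|\bG - \bG'|},
\]
with the identical bound for $\widetilde{H}_R$ (constants independent of $R$, since its hopping is just a restriction of the original). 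The second resolvent identity then gives
\[
\bigl[(z-\hH)^{-1} - (z-\widetilde{H}_R)^{-1}\bigr]_{\bG,\bG'} = \sum_{\bG'',\bG'''} \bigl[(z-\widetilde{H}_R)^{-1}\bigr]_{\bG,\bG''} [V_R]_{\bG'',\bG'''} \bigl[(z-\hH)^{-1}\bigr]_{\bG''',\bG'},
\]
with $V_R := \hH(\xi) - \widetilde{H}_R$. Block-diagonality of $\widetilde{H}_R$ forces $\bG'' \in \Omega_R$, so $[V_R]_{\bG'',\bG'''}\neq 0$ requires $\bG''' \in \Omega_R^c$ and hence $|\bG'''| \geq R$; combined with $|\bG''|\geq R - |\bG''-\bG'''|$, the exponential decay $|[V_R]_{\bG'',\bG'''}|\leq Ce^{-\gamma|\bG''-\bG'''|}$, the bound $|\bG-\bG''|\geq R/2 - |\bG''-\bG'''|$ and $|\bG'''-\bG'|\geq R/2$ (valid since $|\bG|,|\bG'|<R/2$), the aggregate exponent is at least $c\delta R + (\gamma - c\delta)|\bG''-\bG'''|$. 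The remaining series in $|\bG''-\bG'''|$ converges, absorbing the $\mathcal{O}(R^{2d-1})$ polynomial count of boundary sites into a slightly smaller exponential constant, and produces
\[
\bigl|\bigl[(z-\hH)^{-1} - (z-\widetilde{H}_R)^{-1}\bigr]_{\bG,\bG'}\bigr| \leq C\delta^{-2}e^{-c\delta R}.
\]
Integrating against $g(z)$ on $\Gamma$ and using $\int_\Gamma |g(z)|\,|\dd z| \leq C$ produces \eqref{locality:reciprocal}, and the existence of the limit \eqref{ldos_tdl} follows by the Cauchy criterion.

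The main obstacle is the clean combination of three exponential decays (two resolvents at rate $c\delta$ and the hopping at rate $\gamma$) with the polynomial boundary count of $\partial\Omega_R$: the geometric inequality above must be used carefully so that the resulting triple sum remains convergent and the $\delta^{-2}$ prefactor arises as the product of the two resolvent norm bounds on $\Gamma$. A secondary technical matter is handling the unboundedness of $\hH(\xi)$ when defining $M_\eta\hH(\xi)M_{-\eta}$ as an unbounded similarity transform; fortunately $M_\eta$ preserves the positive diagonal kinetic term, so essential self-adjointness and the resolvent bound $\|(z-\hH(\xi))^{-1}\|\leq 2/\delta$ on $\Gamma$ are standard, and only the bounded hopping is perturbed by the Combes--Thomas tilt.
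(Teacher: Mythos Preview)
Your proposal is correct and follows essentially the same strategy as the paper: a contour representation of $g(\hH(\xi))$, a Combes--Thomas exponential decay bound on the resolvent matrix elements, and the second resolvent identity to compare the truncated and full operators, with the geometric constraint $|\bG|,|\bG'|<R/2$ furnishing the $e^{-c\delta R}$ factor. The only differences are implementational. The paper stays finite--dimensional throughout: it embeds $\hH(\xi)^{\Omega_R}$ into an arbitrary larger finite truncation $\hH(\xi)^\Omega$ by zero-padding, uses a closed contour enclosing the (finite) spectra, and obtains the limit via a Cauchy argument uniform in $\Omega$. You instead work directly with the self-adjoint operator $\hH(\xi)$ on $\ell^2(\RL_1^*\times\RL_2^*)$, extend $\hH(\xi)^{\Omega_R}$ by the kinetic diagonal on $\Omega_R^c$ (a clean block-diagonal choice), and use the two horizontal lines $\{t\pm i\delta/2\}$ as the contour, which is justified by the exponential decay of $g$ along real directions. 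Both routes lead to the same resolvent-difference sum and the same bound; your explicit description of the Combes--Thomas tilt $M_\eta$ and the geometric bookkeeping $|\bG-\bG''|\ge R/2-|\bG''-\bG'''|$, $|\bG'''-\bG'|\ge R/2$ is in fact more detailed than the paper's, which simply asserts the final estimate on the double sum.
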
	

\begin{proof}
Let $\Omega\subset\RL_1^* \times \RL_2^*$ such that $\Omega\supsetneq\Omega_R$.
We first expand the matrix $\hH({\xi})^{\Omega_{R}}$ to a ``bigger" matrix $\hH(\xi)_{\Omega}^{\Omega_{R}} \in\C^{\Omega\times\Omega}$ by filling zero matrix elements, i.e.
\begin{eqnarray}
\label{hHRmatrixelement}
\big(\hH(\xi)_{\Omega}^{\Omega_{R}}\big)_{\bG',\bG''}=
\left\{
\begin{array}{ll}
\big(\hH(\xi)^{\Omega_{R}}\big)_{\bG',\bG''} \quad &{\rm if \ \bG',\bG''\in \Omega_R},
\\[1ex]
\quad 0 & {\rm otherwise}.
\end{array}
\right.
\end{eqnarray}
Then we have $\df\big(\hH(\xi)_{\Omega}^{\Omega_{R}}\big) = \df\big(\hH(\xi)^{\Omega_{R}}\big)\cup\{0\}$, where $\df(\cdot)$ represents the spectrum set of an operator.

Let $\mathfrak{s}(g)$ represent the non-analytic region of the given function $g\in\setg$.
We can find a contour $\Cc_{R}\subset\C$ such that it encloses all the eigenvalues of $\hH^{\Omega}(\xi)$ and $\hH(\xi)_{\Omega}^{\Omega_{R}}$,
and satisfies
\begin{align}
\label{resolvant-dist}
\min\Bigg\{
{\rm dist}\big(z,\mathfrak{s}(g)\big), ~
{\rm dist}\left(z,\df\big(\hH(\xi)^{\Omega_{R}}\big)\right), ~
{\rm dist}\left(z,\df\big(\hH(\xi)^{\Omega}\big)\right), ~
{\rm dist}\big(z, \{0\}\big) \Bigg\} \geq \frac{\delta}{2}    
\end{align}
for any $z\in\Cc_R$.

Using the decay of $|\hat{V}_{j,G}|$ in \eqref{decayVj}, we have
\begin{equation}
\label{decayhH}
\left \vert \big(\hH(\xi)^{\Omega}\big)_{\bG',\bG''}\right\vert\leq C e^{- \gamma |\bG'-\bG''|}
\quad{\rm and}\quad
\left \vert \big(\hH(\xi)^{\Omega_R}_{\Omega}\big)_{\bG',\bG''}\right\vert\leq C e^{- \gamma |\bG'-\bG''|} .
\end{equation}
Then by using \eqref{resolvant-dist}, \eqref{decayhH} and a Combes–Thomas type estimate \cite{combes1973asymptotic} (see also similar arguments in \cite[Lemma 6]{chen2016qm}), 
we have that there exists a constant $\bar\gamma>0$ depending only on $\gamma$ of \eqref{decayVj}, such that for any $z\in\Cc_R$
\begin{align}
\label{decay_resolvent}
\left\vert \Big(\big(z-\hH(\xi)^{\Omega}\big)^{-1}\Big)_{\bG',\bG''} \right\vert \leq  \frac{1}{\delta} e^{-\bar\gamma \delta|\bG'-\bG''|} ,
\quad
\left\vert \Big(\big(z-\hH(\xi)_{\Omega}^{\Omega_{R}}\big)^{-1}\Big)_{\bG',\bG''} \right\vert \leq \frac{1}{\delta} e^{-\bar\gamma \delta|\bG'-\bG''|} .
\end{align}

By using \eqref{hHRmatrixelement} and \eqref{decayhH}, we have
\begin{align*}
\left|\Big(\hH(\xi)^{\Omega}-\hH(\xi)^{\Omega_R}_{\Omega} \Big)_{\bG',\bG''}\right| \leq \left\{
\begin{array}{ll}
0 & {\rm if}~|\bG'| \leq R ~{\rm and}~|\bG''| \leq R ,
\\[1ex]
C e^{- \gamma |\bG'-\bG''|} & {\rm otherwise} .
\end{array}
\right.
\end{align*}
This together with the decay estimates of resolvents in \eqref{decay_resolvent} implies that, for $\bG,\bG'\in \Omega_{R/2}$
\begin{align}
\label{differenceofresolvent}
& \left| \Big(\big(z-\hH(\xi)^{\Omega}\big)^{-1}\Big)_{\bG,\bG'} -  \Big(\big(z-\hH(\xi)^{\Omega_R}_{\Omega}\big)^{-1}\Big)_{\bG,\bG'} \right| 
\\ \nonumber
\leq & \sum_{\bG''\in\Omega}\sum_{\bG'''\in\Omega} \left| \Big(\big(z-\hH(\xi)^{\Omega_R}_{\Omega}\big)^{-1}\Big)_{\bG,\bG''} 
\Big( \hH(\xi)^{\Omega}-\hH(\xi)^{\Omega_R}_{\Omega} \Big)_{\bG'',\bG'''} 
\Big(\big(z-\hH(\xi)^{\Omega}\big)^{-1}\Big)_{\bG''',\bG'} \right|
\\[1ex] \nonumber
\leq & C \delta^{-2} \left( \sum_{\bG''\in\Omega\backslash\Omega_R}\sum_{\bG'''\in\Omega} + \sum_{\bG''\in\Omega_R}\sum_{\bG'''\in\Omega\backslash\Omega_R} \right)
e^{-\bar\gamma \delta|\bG-\bG''|} e^{- \gamma |\bG''-\bG'''|} e^{-\bar\gamma \delta|\bG'''-\bG'|}
\\[1ex] \nonumber
\leq & C \delta^{-2} e^{-c \delta R} ,
\end{align}
where the constants $C$ and $c$ in the last line depend only on $\gamma$ and $\bar\gamma$.

We can then obtain by using the contour integral representation and the estimate \eqref{differenceofresolvent} that
\begin{align*}
& \left|\lodoss{\hH(\xi)^{\Omega}}{\bG}{\bG'} -\lodoss{\hH(\xi)^{\Omega_{R}(\bG)}}{\bG}{\bG'} \right| 
\\[1ex]
= & \left| \frac{1}{2\pi i}\oint_{\Cc_R} g(z) \bigg( \big(z-\hH^{\Omega}(\xi)\big)^{-1} - \big(z-\hH(\xi)_{\Omega}^{\Omega_R}\big)^{-1} \bigg)_{\bG,\bG'} \dd z \right|
\\[1ex]
\leq & C \delta^{-2} e^{- c\delta R} \oint_{\Cc_R} \big|g(z)\big| \dd z .
\end{align*}
Using the exponential decay of $g$ (indicated in the definition \eqref{deftestfunctiong}), we have that, as $R\rightarrow\infty$, $\oint_{\Cc_R} \big|g(z)\big| \dd z$ is uniformly bounded by some constant depending on $\zeta$.
Note that the above estimate holds for arbitrary $\Omega\supset\Omega_R$.
Therefore, the limit of $\lodoss{\hH(\xi)^{\Omega_{R}(\bG)}}{\bG}{\bG'}$ exists and the estimate \eqref{locality:reciprocal} holds. 
\end{proof}

We can then further show the decay of elements $\lodoss{\hH(\xi)}{\bG}{\bG'}$ as $|\bG-\bG'|$ increases.
This result is used in the Proof of Lemma \ref{lem:boundtracejk}.

\begin{lemma}
\label{lemma:tdl:gH00}
Let $\xi\in\R^d$, $g\in\setg$ and $\bG=(G_1,G_2), ~\bG'=(G'_1,G'_2) \in \RL_1^* \times \RL_2^*$, then there exist positive constants $C$ and $c$ depending on $\zeta$ and $\delta$, such that
\begin{align}
\label{decay-xi-G1-G2}
\left\vert \lodoss{\hH(\xi)}{\bG}{\bG'} \right\vert \leq C e^{- c\big( \min \big\{|\xi+G_1+G_2|,|\xi+G'_1+G'_2| \big\} +  |\bG-\bG'| \big)} .
\end{align}
\end{lemma}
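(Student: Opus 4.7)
The plan is to bound $\bigl|[g(\hH(\xi))]_{\bG,\bG'}\bigr|$ through the Cauchy integral
\[
[g(\hH(\xi))]_{\bG,\bG'} = \frac{1}{2\pi i}\oint_{\mathcal{C}} g(z)\bigl[(z-\hH(\xi))^{-1}\bigr]_{\bG,\bG'}\,dz,
\]
justified via the finite truncations $\hH(\xi)^{\Omega_R}$ and Lemma \ref{lemma:locality:reciprocal} to pass to the limit, with $\mathcal{C}$ chosen inside the analyticity strip $|\text{Im}\,z|\leq\delta$, staying at distance at least $\delta/2$ from $\sigma(\hH(\xi))$ and enclosing the relevant spectrum. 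The two factors in the target estimate will come from two complementary bounds on the resolvent kernel.

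First I would establish the $e^{-c|\bG-\bG'|}$ factor by a standard Combes--Thomas estimate as in Lemma \ref{lemma:locality:reciprocal}. Conjugating $(z-\hH(\xi))$ by a diagonal weight $e^{\eta|\cdot-\bG|}$ with $\eta$ small (controlled by the Fourier-decay rate $\gamma$ of $v_1,v_2$ and by $\delta$), and using that the conjugated potential remains a bounded perturbation of $H_0(\xi)$ for $\eta<\gamma$, one obtains $|[(z-\hH(\xi))^{-1}]_{\bG,\bG'}|\leq C\delta^{-1}e^{-c\delta|\bG-\bG'|}$. Integrating against $g(z)$ and using $|g(z)|\leq Ce^{-\zeta|\text{Re}\,z|}$ to control the contour length gives the spatial part of the target bound.

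For the kinetic decay, I would exploit the splitting $\hH(\xi)=H_0(\xi)+V$ with $H_0(\xi)$ diagonal of entries $E_{\bG}=\tfrac{1}{2}|\xi+G_1+G_2|^2$, together with the resolvent identity
\[
[(z-\hH(\xi))^{-1}]_{\bG,\bG'} = \frac{\delta_{\bG,\bG'}}{z-E_{\bG}} + \frac{1}{z-E_{\bG}}\sum_{\bG''}V_{\bG,\bG''}\,[(z-\hH(\xi))^{-1}]_{\bG'',\bG'}.
\]
Inserting this into the contour integral extracts a factor $(z-E_{\bG})^{-1}$, and iterating it produces factors $(z-E_{\bG})^{-n}$ damped by the exponential decay of $V$ and by the Combes--Thomas bound on the remaining resolvent. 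By Cauchy's estimates for the analytic $g$, the resulting contour integrals $\oint g(z)/(z-E_{\bG})^n\,dz$ are controlled by $g^{(n-1)}(E_{\bG})/(n-1)!$, which inherit exponential smallness in $E_{\bG}$ from $|g(z)|\leq Ce^{-\zeta|\text{Re}\,z|}$ in the strip, yielding a bound $\leq C e^{-c|\xi+G_1+G_2|}$. Applying the resolvent identity symmetrically on the right yields the same type of bound with $\bG'$ in place of $\bG$. Combining both bounds (and using $\min\{a,b\}\leq(a+b)/2$) delivers the claimed exponential decay in $\min\{|\xi+G_1+G_2|,|\xi+G_1'+G_2'|\}$.

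The main technical obstacle is that a single iteration of the resolvent identity only yields polynomial decay $\mathcal{O}(1/E_{\bG})$ in the kinetic direction; the exponential rate must be extracted from the full Dyson-like series by balancing the growth of the $V^n$ factors against the exponential shrinkage of $g^{(n)}(E_{\bG})/n!$ afforded by the analyticity and exponential decay of $g$ in the strip. Tracking the allowable constants $c$ (depending on $\gamma,\delta,\zeta$) and obtaining uniformity in $\xi$ requires careful but essentially parallel bookkeeping to the proof of Lemma \ref{lemma:locality:reciprocal}, now with the added kinetic weight.
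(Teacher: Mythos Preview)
Your Combes--Thomas step for the off-diagonal factor $e^{-c|\bG-\bG'|}$ is the same as the paper's. The mechanism you propose for the kinetic decay, however, does not work as written. Iterating the resolvent identity does \emph{not} produce powers $(z-E_{\bG})^{-n}$: the $n$-th Dyson term is a path sum
\[
\sum_{\bG_1,\ldots,\bG_{n-1}} \frac{1}{z-E_{\bG}}\,V_{\bG,\bG_1}\,\frac{1}{z-E_{\bG_1}}\cdots V_{\bG_{n-1},\bG'}\,\frac{1}{z-E_{\bG'}} ,
\]
with a single factor of $(z-E_{\bG})^{-1}$ and the rest at intermediate sites. More importantly, even if such powers were present, you could not evaluate $\oint g(z)(z-E_{\bG})^{-n}\,[(z-\hH(\xi))^{-1}]_{\bG'',\bG'}\,dz$ by Cauchy's formula, because the residual resolvent is not holomorphic inside $\mathcal{C}$ (which must enclose $\sigma(\hH(\xi))$). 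Bounding that factor pointwise on $\mathcal{C}$ and integrating $|g(z)|/|z-E_{\bG}|$ only yields $\mathcal{O}(E_{\bG}^{-1})$, the very obstruction you identify; summing the Dyson series to upgrade this to exponential decay is not a bookkeeping matter but a genuinely different argument you have not supplied.

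The paper avoids the Dyson expansion entirely. Assuming without loss of generality $|\xi+G_1+G_2|\leq|\xi+G_1'+G_2'|$ and setting $\bar R=\tfrac12|\xi+G_1+G_2|$, it splits into two cases. If $|\bG-\bG'|\geq\bar R/2$, Combes--Thomas alone already yields both decays, since $|\bG-\bG'|$ dominates $\tfrac14|\xi+G_1+G_2|$. If $|\bG-\bG'|<\bar R/2$, the key idea is to work on the truncated operator $\hH(\xi)^{\Omega_{\bar R}(\bG)}$: for every $\bG''\in\Omega_{\bar R}(\bG)$ one has $|\xi+G_1''+G_2''|\gtrsim\bar R$, so by Ger\v{s}gorin the \emph{entire} spectrum of the truncation lies above $\tfrac18|\xi+G_1+G_2|^2-C_V$. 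Consequently the contour $\mathcal{C}$ can be chosen with $|\mathrm{Re}\,z|\gtrsim|\xi+G_1+G_2|$ everywhere, and the bound $|g(z)|\leq Ce^{-\zeta|\mathrm{Re}\,z|}$ gives $e^{-c\zeta|\xi+G_1+G_2|}$ directly from the contour integral, multiplied by the Combes--Thomas factor $e^{-c\delta|\bG-\bG'|}$ from the resolvent. Passing from the truncated to the full matrix element uses Lemma~\ref{lemma:locality:reciprocal}, whose error $e^{-c\delta\bar R}$ is absorbed in this regime. No series expansion or derivative estimates on $g$ are needed.
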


\begin{proof}
Without loss of generality, we assume that $|\xi+G_1+G_2|\leq|\xi+G'_1+G'_2|$.
Then we let $\bar{R}:=\frac{1}{2}|\xi+G_1+G_2|$ and $\Omega_{\bar{R}}(\bG):=B_{\bar{R}}(\bG)\cap\big(\RL_1^*\times\RL_2^*\big)$.
We shall prove \eqref{decay-xi-G1-G2} by considering the two cases $|\bG-\bG'|<\frac{\bar{R}}{2}$ and $|\bG-\bG'|\geq\frac{\bar{R}}{2}$, see the schematic plot in Figure \ref{fig:configs}.

\begin{figure}[!htb]
\centering
\vskip 0.2cm
\includegraphics[width=13.6cm]{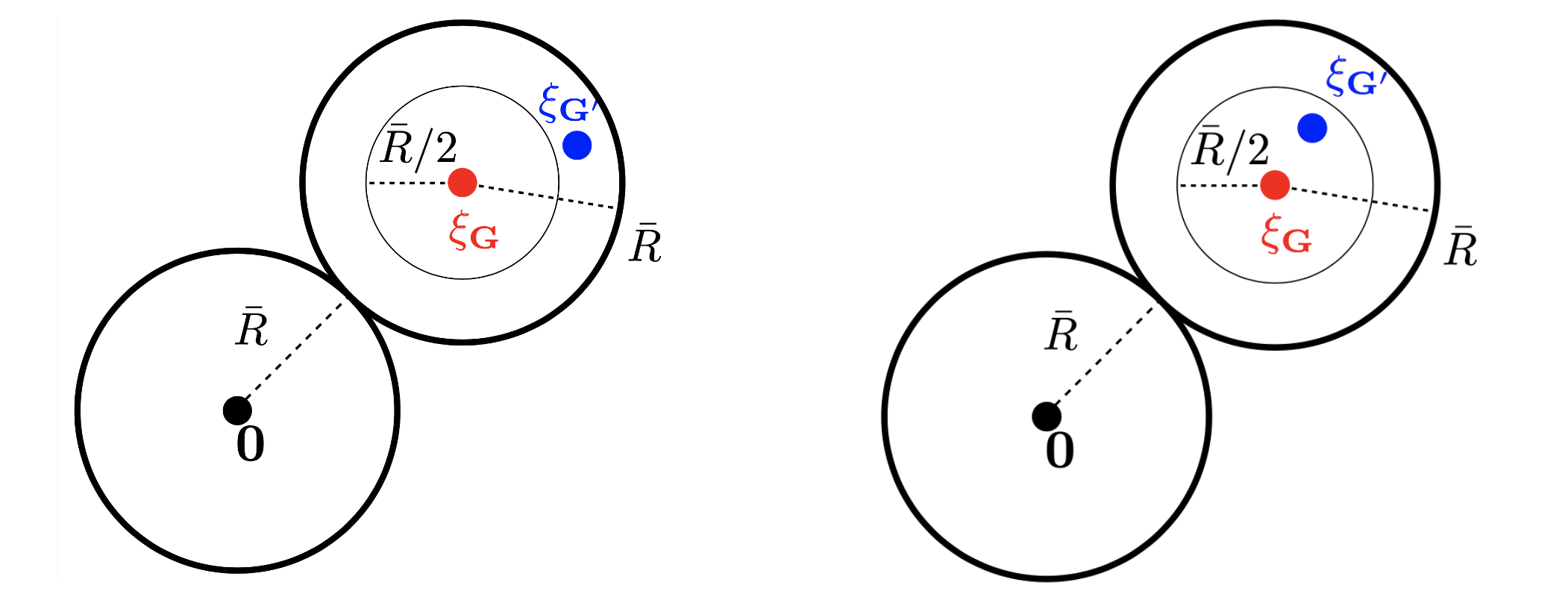}
\caption{For proof of Lemma \ref{lemma:tdl:gH00}, we consider $\xi_{\bf G} = \xi + G_1 + G_2$ and $\xi_{G'} = \xi + G_1' +G_2'$ as displayed. Left: the case of $|\xi_{\bf G} - \xi_{\bf G'}| < \bar R/2$. Right: the case of $|\xi_{\bf G} - \xi_{\bf G'}| \geq \bar R/2$.}
\label{fig:configs}
\end{figure}

We first consider the case when $|\bG-\bG'|<\frac{\bar{R}}{2}$. 
Then we have $|(G'_1+G'_2)-(G_1+G_2)|\leq |\bG-\bG'|<\frac{\bar{R}}{2}$.
By using the decay of $|\hat{V}_{j,G}|$ in \eqref{decayVj}, we see that there exists a constant $a>0$ such that $\sum_{j=1}^2\sum_{G_j\in \RL_j^*}|\hat{V}_{j,G}|<C_V$. 
This together with the definition \eqref{Hhat:xi} and the Ger\v{s}gorin's theorem \cite{horn2012matrix} implies that 
\begin{align*}
\df\big(\hH(\xi)^{\Omega_{\bar{R}}(\bG)}\big) \subset \bigcup_{(G''_1,G''_2)\in \Omega_{\bar{R}}(\bG)} 
\Bigg\{\lambda\in \R: ~
\left\vert \lambda - \frac{1}{2}\left\vert \xi + G''_1+G''_2 \right\vert^2 \right\vert \leq C_V \Bigg\} .
\end{align*}
Therefore, for $g\in\setg$, there exists a contour $\Cc$ that encloses all the eigenvalues of $\hH(\xi)^{\Omega_{\bar{R}}(\bG)}$ and satisfies
\begin{align}
\nonumber
& \min\Big\{ {\rm dist}\big(z,\mathfrak{s}(g)\big) ,~ {\rm dist}\big(z,\df\big(\hH(\xi)^{\Omega_{\bar{R}}(\bG)}\big) \Big\} > \frac{\delta}{2}
\qquad\qquad{\rm and}
\\[1ex]
\label{proof:est:Rez}
& \big| {\rm Re}(z) \big| \geq \min\limits_{(G''_1,G''_2)\in\Omega_{\bar{R}}(\bG)} \Bigg\{\frac{1}{2}|\xi+G''_1+G''_2|^2 \Bigg\} - C_V - 1
\end{align}
for any $z\in\Cc$.
We observe from $\bar{R}=\frac{1}{2}|\xi+G_1+G_2|$ that
\begin{equation}
\min\limits_{(G''_1,G''_2)\in\Omega_{\bar{R}}(\bG)} \frac{1}{2}|\xi + G_1''+G_2''|^2 \geq \frac{1}{2} \bar R^2 = \frac{1}{8} |\xi + G_1 +G_2|^2.
\end{equation}
This together with \eqref{proof:est:Rez} implies
\begin{align}
\label{proof-Rez-estimate}
|{\rm Re}(z)| \geq \frac{1}{8}|\xi+G_1+G_2|^2 - C_V - 1 \geq \frac{1}{8}|\xi+G_1+G_2| - \bar{C}_V
\qquad\forall~z\in\Cc ,
\end{align}
where $\bar{C}_V$ is a constant depending only on $C_V$.
Then by combining the decay property of $g$ (indicated in \eqref{deftestfunctiong}), the estimate of resolvent as that of \eqref{decay_resolvent}, the contour integral representation and \eqref{proof-Rez-estimate}, we have 
\begin{align}
\label{proof-decay-gH-xi}
\left\vert \lodoss{\hH(\xi)^{\Omega_{\bar{R}}(\bG)}}{\bG}{\bG'} \right\vert 
& \leq \frac{1}{2\pi} \oint_{\Cc} |g(z)| \cdot \left\vert \Big(\big(z-\hH(\xi)^{\Omega_{\bar{R}}(\bG)}\big)^{-1}\Big)_{\bG,\bG'} \right\vert \dd z
\\[1ex] \nonumber
& \leq C \frac{1}{2\pi} \oint_{\Cc} e^{-\zeta\big(\frac{1}{8}|\xi+G_1+G_2| - \bar{C}_V\big)} \frac{1}{\delta} e^{-\bar\gamma \delta|\bG-\bG'|} \dd z
\\[1ex] \nonumber
& \leq C \delta^{-1} e^{-c\zeta |\xi+G_1+G_2|}  e^{-\bar\gamma \delta|\bG-\bG'|} .
\end{align}
This together with the estimate \eqref{locality:reciprocal} in Lemma \ref{lemma:locality:reciprocal} implies that
\begin{align}
\label{decay-gH-xi}
\left\vert \lodoss{\hH(\xi)}{\bG}{\bG'} \right\vert 
\leq C\delta^{-1} e^{-c\zeta |\xi+G_1+G_2|} e^{-c\delta|\bG-\bG'|} + C\delta^{-2} e^{-c\delta\bar{R}} ,
\end{align}
which together with the fact $|\bG-\bG'|<\frac{\bar{R}}{2}=\frac{1}{4}|\xi+G_1+G_2|$ leads to
\eqref{decay-xi-G1-G2}. 

In the case when $|(G'_1+G'_2)-(G_1+G_2)| \geq \frac{\bar{R}}{2}$, the proof is actually simpler.
It is only necessary for us to take a sufficiently large $\tilde{R}$ such that $\bG'\in\Omega_{\tilde{R}}(\bG)$.
By using the estimate of \eqref{decay_resolvent} and the contour integral representation, we have that for $g\in\setg$,
\begin{align*}
\left\vert \lodoss{\hH(\xi)^{\Omega_{\tilde{R}}(\bG)}}{\bG}{\bG'} \right\vert 
\leq C\sup_{z\in\Cc} \left\vert \Big(\big(z-\hH(\xi)^{\Omega_{\tilde{R}}(\bG)}\big)^{-1}\Big)_{\bG,\bG'} \right\vert 
\leq C\delta^{-1} e^{-\bar\gamma \delta|\bG-\bG'|} .
\end{align*}
Then by using the estimate \eqref{locality:reciprocal} in Lemma \ref{lemma:locality:reciprocal} and the fact that $|\bG-\bG'|\geq \frac{\bar{R}}{2} = \frac{1}{2}|\xi+G_1+G_2|$, we can obtain the estimate \eqref{decay-xi-G1-G2}.
This completes the proof of Lemma \ref{lemma:tdl:gH00}.
\end{proof}

\vskip 0.2cm

In the rest of this appendix, we shall prove the equation \eqref{connection} in Remark \ref{remark:Hxi}, that is, $\unfold_\xi \big(\widehat{H\psi}\big) = \hH(\xi) \big(\unfold_\xi\hat\psi\big)$ for any $\psi\in \Sc(\R^d)$.

\begin{proof}[Proof of \eqref{connection}]
For ${\bf G} = (G_1,G_2) \in \RL_1^* \times \RL_2^*$, we have from \eqref{V_series} and the definition of $\unfold_\xi$ that
\begin{align*}
& \Big( \unfold_\xi \big(\widehat{H\psi}\big) \Big)(\bG)
= \big(\widehat{H\psi}\big)(\xi+G_1+G_2) 
= \int_{\R^d} \big(H\psi\big)(x) e^{-i(\xi+G_1+G_2)\cdot x} \dd x
\\[1ex]\nonumber
= & \int_{\R^d} \Big(-\frac{1}{2}\Delta\psi(x) + v_1(x)\psi(x) + v_2(x)\psi(x) \Big) e^{-i(\xi+G_1+G_2)} \dd x
\\[1ex]\nonumber
= & \frac{1}{2}\big|\xi+G_1+G_2\big|^2 \hat{\psi}(\xi+G_1+G_2)
+ \sum_{j\in\{1,2\}} \sum_{G'_j\in\RL^*_j} \hat{V}_{j,G'_j} \int_{\R^d} \psi(x)e^{-i(\xi+G_1+G_2-G_j')} \dd x
\\[1ex]\nonumber
= & \frac{1}{2}\big|\xi+G_1+G_2\big|^2 \big(\unfold_{\xi}\hat{\psi}\big)(\bG) + \sum_{G'_1\in\RL^*_1} \hat{V}_{1,G_1-G'_1} \big(\unfold_{\xi}\hat{\psi}\big)(G'_1,G_2) + \sum_{G'_2\in\RL^*_2} \hat{V}_{2,G_2-G'_2} \big(\unfold_{\xi}\hat{\psi}\big)(G_1,G'_2) .
\end{align*}
This together with the definition \eqref{Hhat:xi} of $\hH(\xi)$ implies 
\begin{align*}
\Big( \unfold_\xi \big(\widehat{H\psi}\big) \Big)(\bG)
= \sum_{\bG'\in\RL_1^*\times\RL_2^*} \big[\hH(\xi)\big]_{\bG,\bG'} \hat{\psi}(\xi+G_1'+G_2')
= \Big(\hH(\xi) \big(\unfold_\xi\hat\psi\big)\Big)(\bG)
\quad\forall~ \bG\in\RL_1^* \times \RL_2^* ,     
\end{align*}
and hence leads to \eqref{connection}.
\end{proof}

\section{Proofs}
\label{sec:proof}
\setcounter{equation}{0}

\subsection{Proof of Lemma \ref{lem:boundtracejk}}
\label{sec:proof:lemma:chigH}
To prove Lemma \ref{lem:boundtracejk}, we shall prove that the operator $\chi_j g(H)\chi_k$ is trace class, it has a kernel $K_{jk}(x,y)$ that is continuous and satisfies \eqref{K_jk}, and the trace of $\chi_j g(H)\chi_k$ satisfies the estimate \eqref{lem:trjk}.

We first show that the real-space kernel of $\chi_jg(H)\chi_k$ is \eqref{K_jk}.
To do this, we shall show that 
\begin{align}
\label{proof-kernel}
& \big((\chi_j g(H)\chi_k)\psi\big)(x) = \int_{\R^d} K_{jk}(x,y) \psi(y)\dd y 
\qquad\qquad{\rm with}
\\ \nonumber
& K_{jk}(x,y) = \frac{1}{(2\pi)^d} \chi_j(x)\chi_k(y) \sum_{{\bf G} \in \RL_1^*\times \RL_2^*}e^{-i (G_1+G_2)\cdot y} \int_{\R^d}e^{i\xi\cdot(x-y)} \big[g(\hH(\xi))\big]_{{\bf 0},{\bf G}} \dd\xi .
\end{align}
Using \eqref{connection} and the definition Fourier transform, we have $\FT H \FT^{-1} = \unfold_\xi^{-1} \hH(\xi) \unfold_\xi$ on $\Sc(\R^d)$.
Then we have from the spectrum theory that for $g\in\setg$,
\begin{align*}
\FT g(H) \FT^{-1} = \unfold_\xi^{-1} g\big(\hH(\xi)\big) \unfold_\xi .
\end{align*}
This indicates that for any $\xi\in\R^d$ and $\psi\in \Sc(\R^d)$,
\begin{eqnarray}
\label{unfold}
\Big(\FT\big(g(H)\psi\big)\Big)(\xi) = \sum_{\bG\in\RL_1^* \times \RL_2^*} \big[g(\hat{H}(\xi))\big]_{\pmb{0},\bG}\hat{\psi}(\xi+G_1+G_2) .
\end{eqnarray}
Then we obtain
\begin{align*}
& \big(g(H)(\chi_k\psi)\big)(x)    
= \FT^{-1}\Big(\FT\big(g(H)(\chi_k\psi)\big)\Big)(x)
\\[1ex]
& \qquad = \frac{1}{(2\pi)^d} \int_{\R^d} e^{i\xi\cdot x} \sum_{\bG\in\RL_1^* \times \RL_2^*} \big[g(\hat{H}(\xi))\big]_{\bzero,\bG}\widehat{\chi_k\psi}(\xi+G_1+G_2) \dd\xi
\\[1ex]
& \qquad = \frac{1}{(2\pi)^d} \int_{\R^d} e^{i\xi\cdot x} \sum_{\bG\in\RL_1^* \times \RL_2^*} \big[g(\hat{H}(\xi))\big]_{\bzero,\bG} \int_{\R^d} e^{-i(\xi+G_1+G_2)\cdot y} \chi_k(y)\psi(y) \dd y\dd\xi ,
\end{align*}
which implies the expression of $K_{jk}$ in \eqref{proof-kernel} by multiplying both sides with $\chi_j(x)$.
Note that $\Sc(\R^d)$ is dense in $L^2(\R^d)$, we complete the proof of \eqref{K_jk}.

We shall then show that $\chi_j g(H)\chi_k$ is trace class.
Let $\Lambda := \sqrt{1-\triangle}$ for $\xi\in\R^d$.
We decompose the operator as follows,
\begin{align*}
\chi_j g(H) \chi_k = \big(\chi_j \Lambda^{-1}\big) \big(\Lambda g(H) \Lambda\big) \big(\Lambda^{-1} \chi_k\big).    
\end{align*}
We have from \cite[Theorem 3.8.6]{simon2015operator} that $\chi_j\Lambda^{-1}$ and $\Lambda^{-1}\chi_k$ are Hilbert-Schmidt operators. 
Then to prove that $\chi_j g(H)\chi_k$ is trace class, by using \cite[Theorem 3.8.2]{simon2015operator}, it is only necessary for us to show that $\Lambda g(H) \Lambda$ is a bounded operator. 
To to this, we will show that for any $\psi \in L^2(\R^d)$, it holds that $\Lambda g(H) \Lambda\psi\in L^2(\R^d)$ and $\|\Lambda g(H) \Lambda\psi\|_{L^2}\leq C\|\psi\|_{L^2}$.
Let $\big\langle \xi \big\rangle := \sqrt{1 + |\xi|^2}$ for $\xi\in\R^d$, we observe that $\FT(\Lambda\psi)(\xi) = \big\langle \xi \big\rangle \hat{\psi}(\xi)$ for any $\xi\in\R^d$.
Then by using \eqref{unfold}, we have
\begin{align*}
\FT\big( \Lambda g(H) \Lambda\psi \big)(\xi) 
& = \big\langle \xi \big\rangle \sum_{{\bf G} \in \RL_1^*\times\RL_2^*} \big[g(\hat{H})(\xi)\big]_{\bzero,\bG} \widehat{\Lambda\psi}(\xi + G_1 + G_2)
\\[1ex]
& = \big\langle \xi \big\rangle \sum_{{\bf G} \in \RL_1^*\times\RL_2^*} \big[g(\hat{H})(\xi)\big]_{\bzero,\bG} \big\langle \xi + G_1 + G_2 \big\rangle \hat \psi(\xi + G_1 + G_2) .
\end{align*}
We obtain from the decay estimate \eqref{decay-xi-G1-G2} in Lemma \ref{lemma:tdl:gH00} that
\begin{align*}
\big|\FT(\Lambda g(H) \Lambda \psi)(\xi)\big| \leq \sum_{{\bf G} \in \RL_1^*\times \RL_2^*}  \langle \xi\rangle \langle \xi + G_1+G_2\rangle e^{-c{\rm min}\big\{|\xi|,|\xi + G_1+G_2|\big\}} e^{-c|{\bf G}|} \big|\hat \psi(\xi + G_1+G_2)\big| ,
\end{align*}
which together with the fact ${\rm min} \big\{ |\xi|,|\xi + G_1+G_2| \big\} + |{\bf G}| \geq {\rm max} \big\{ |\xi|,|\xi + G_1+G_2| \big\}$ leads to
\begin{align*}
\big| \FT(\Lambda g(H) \Lambda \psi)(\xi) \big| 
\leq C \sum_{{\bf G} \in \RL_1^*\times \RL_2^*} e^{-c \big({\rm min} \{ |\xi|,|\xi+G_1+G_2| \} + |{\bf G}|\big)} \big|\hat \psi(\xi + G_1+G_2)\big| .
\end{align*}
By squaring both sides and applying the Cauchy Schwarz inequality, we get
\begin{align*}
\Big| \FT\big(\Lambda g(H) \Lambda)\psi\big)(\xi) \Big|^2 
\leq C \sum_{{\bf G} \in \R_1^*\times \R_2^*} e^{-c_2 (|\xi| + |{\bf G}|)} \big|\hat \psi(\xi+G_1+G_2)\big|^2 .
\end{align*}
Integrating both sides of the last estimate with respect to $\xi$, we obtain $\big\|\FT(\Lambda g(H) \Lambda\psi)\big\|_2 \leq C\|\hat\psi\|_2$, and hence $\|\Lambda g(H) \Lambda\psi\|_{L^2}\leq C\|\psi\|_{L^2}$.
This indicates that $\Lambda g(H) \Lambda$ is a bounded operator over $L^2(\R^d)$.

Next we show the continuity of the kernel $K_{jk}$. 
Let $\eps_1,\eps_2\in\R^d$ be small vectors.
We have by using \eqref{K_jk} and the decay estimate \eqref{decay-xi-G1-G2} in Lemma \ref{lemma:tdl:gH00} that
\begin{align*}
& \big| K_{jk}(x+\eps_1,y+\eps_2) - K_{jk}(x,y) \big| 
\\[1ex]
= & \frac{1}{(2\pi)^d} \Bigg|
\sum_{{\bf G} \in \RL_1^*\times \RL_2^*} e^{-i(G_1+G_2)\cdot y} 
\int_{\R^d} \bigg( \chi_j(x+\eps_1)\chi_k(y+\eps_2) e^{-i(G_1+G_2)\cdot\varepsilon_2} e^{i\xi \cdot (\eps_1-\eps_2)} 
\\[1ex]
& \qquad\qquad \qquad\qquad 
- \chi_j(x)\chi_k(y) \bigg) 
~ \big[g(\hH(\xi))\big]_{{\bf 0},{\bf G}} \dd\xi \Bigg|
\\[1ex]
\leq & C \sum_{{\bf G} \in \RL_1^*\times \RL_2^*} e^{-c|\bG|}
\int_{\R^d} \bigg( \left| \chi_j(x+\eps_1)\chi_k(y+\eps_2) \Big(e^{-i(G_1+G_2)\cdot\varepsilon_2} e^{i\xi \cdot (\eps_1-\eps_2)} - 1\Big) \right| 
\\[1ex]
& \qquad\qquad \qquad\qquad 
+ \Big| \chi_j(x+\eps_1)\chi_k(y+\eps_2) - \chi_j(x)\chi_k(y) \Big| \bigg) 
~ e^{-c\min\big\{|\xi|,|\xi+G_1+G_2|\big\}} \dd\xi .
\end{align*}
We observe that the right-hand side of the above estimate goes to zero as $\eps_1,\eps_2 \rightarrow \bzero$, which hence verifies the continuity of $K_{jk}$.

Finally, we shall prove the estimate \eqref{lem:trjk}.
Since $\chi_j$ has compact support, we have that for any $j,k\in\Z^d$, the operator $\chi_j g(H)\chi_k$ can be considered as an operator on a compact domain, and hence effectively has an associated probability measure over the domain.
Note that we have also shown that $\chi_j g(H)\chi_k$ is trace class and its kernel $K_{jk}$ is continuous.
Therefore, by using the result in \cite[Proposition 3.11.2]{simon2015operator}, we have that the trace of $\chi_j g(H)\chi_k$ can be given by its kernel
\begin{align}
\label{proof-tc-c}
{\rm Tr}\big(\chi_j g(H)\chi_k\big)
& = \int_{\R^d} K_{jk}(x,x) \dd x
\\ \nonumber
& = \frac{1}{(2\pi)^d} \sum_{{\bf G} \in \RL_1^*\times \RL_2^*} \int_{\R^d} \big[g(\hH(\xi))\big]_{\bzero,\bG} \dd\xi \int_{\R^d}\chi_j(x)\chi_k(x)e^{-i (G_1+G_2)\cdot x} \dd x .
\end{align}
Since $\chi_j(x)\chi_k(x)=0$ when $|j-k|>1$, we have from \eqref{proof-tc-c} that ${\rm Tr}\big(\chi_j g(H)\chi_k\big)
=0$ when $|j-k|>1$.
When $|j-k|\leq 1$, we can obtain by using the decay estimate \eqref{decay-xi-G1-G2} in Lemma \ref{lemma:tdl:gH00} that
\begin{align*}
{\rm Tr}\big(\chi_j g(H)\chi_k\big) 
\leq \sum_{{\bf G} \in \RL_1^*\times \RL_2^*} \int_{\R^d} \big[g(\hH(\xi))\big]_{{\bf 0},{\bf G}} \dd\xi 
\leq C \sum_{{\bf G} \in \RL_1^*\times \RL_2^*} e^{-c|\bG|} \int_{\R^d} e^{-c |\xi+G_1+G_2| } \dd\xi ,
\end{align*}
which can be bounded by a constant $C$.
This completes the proof of Lemma \ref{lem:boundtracejk}.
$\hfill\square$

\subsection{Proof of Theorem \ref{theo:tdllimit}}
\label{sec:proof:theo:tdllimit}

We shall prove that the limit of \eqref{def:dosR} exists as $R\rightarrow\infty$ and
\begin{align}
\label{dos:gH00}
\aTr\big(g(H)\big) := \lim_{R\rightarrow\infty}\aTr_R\big(g(H)\big)
= \int_{\R^d} [g(\hH(\xi))]_{{\bf 0,0}} \dd\xi.
\end{align}

Let $\chi_R(x):=\sum_{j\in\Z^d\cap B_R} \chi_j(x)$.
Note that Lemma \ref{lem:boundtracejk} implies that the trace of $\chi_jg(H)\chi_k$ is well defined for any $j,k \in \Z^d$.
This together with the definition of $\underline{\rm Tr}_R(g(H))$ in \eqref{def:dosR} 
and the expression of kernel $K_{jk}$ in \eqref{K_jk} leads to
\begin{align*}
& \hskip -0.5cm
\underline{\rm Tr}_R(g(H)) 
= \frac{1}{|B_R|}\sum_{j,k\in\Z^d\cap B_R}{\rm Tr}\big(\chi_jg(H)\chi_k\big)
\\[1ex]
& = \frac{1}{|B_R|}\sum_{j,k\in\Z^d\cap B_R} \int_{\R^d} K_{jk}(x,x)\dd x
\\[1ex]
&= \frac{1}{(2\pi)^d}\frac{1}{|B_R|} \sum_{j,k\in\Z^d\cap B_R}
\sum_{{\bf G} \in \RL_1^*\times \RL_2^*} \int_{\R^d} \chi_j(x)\chi_k(x) e^{-i (G_1+G_2)\cdot x} \dd x
\int_{\R^d} \big[g(\hH(\xi))\big]_{{\bf 0},{\bf G}} \dd\xi
\\[1ex]
&= \frac{1}{(2\pi)^d}\frac{1}{|B_R|} \Bigg(\int_{\R^d} \chi^2_R(x) \dd x \Bigg)
\Bigg(\int_{\R^d} \big[g(\hH(\xi))\big]_{{\bf 0},{\bf 0}} \dd\xi \Bigg)
\\[1ex]
& \quad + \frac{1}{(2\pi)^d} \frac{1}{|B_R|} 
\sum_{{\bf G} \in (\RL_1^*\times \RL_2^*)\backslash \{\bzero\}} 
\Bigg(\int_{\R^d} \chi^2_R(x) e^{-i (G_1+G_2)\cdot x} \dd x \Bigg)
\Bigg(\int_{\R^d} \big[g(\hH(\xi))\big]_{\bzero,\bG} \dd\xi \Bigg)
\\[1ex]
& =: T_1 + T_2 .
\end{align*}

For the first term $T_1$, by using the fact $\chi_R(x)=1$ when $|x|\leq R$ and $\chi_R(x)=0$ when $|x|\geq R+1$, we have $\displaystyle  \lim_{R\rightarrow\infty}\frac{1}{|B_R|}\int_{\R^d}\chi_R^2(x)\dd x = 1$.
Hence the limit of $T_1$ exists as $R\rightarrow\infty$,
\begin{align*}
\lim_{R\rightarrow\infty} T_1 = \int_{\R^d} [g(\hH(\xi))]_{{\bf 0,0}} \dd\xi .
\end{align*}

For the second term $T_2$, we obtain from the decay estimate \eqref{decay-xi-G1-G2} that 
$\int_{\R^d} \big[g(\hH(\xi))\big]_{\bzero,\bG} \dd\xi \leq Ce^{-c|\bG|}$,
which together with the fact $\big|e^{-i (G_1+G_2)\cdot x}\big|\leq 1$ implies that the series
\begin{align*}
\sum_{{\bf G} \in (\RL_1^*\times \RL_2^*)\backslash \{\bzero\}} 
\Bigg(\frac{1}{|B_R|} \int_{\R^d} \chi^2_R(x) e^{-i (G_1+G_2)\cdot x} \dd x \Bigg)
\Bigg(\int_{\R^d} \big[g(\hH(\xi))\big]_{\bzero,\bG} \dd\xi \Bigg)  
\end{align*}
is uniformly convergent.
Moreover, we have that for any $\zeta\in\R^d,~\zeta\neq 0$, it holds that
\begin{align*}
\lim_{R\rightarrow\infty} \frac{1}{|B_R|} \int_{\R^d}\chi_R^2(x)e^{i\zeta\cdot x}\dd x = 0 .
\end{align*}
Note that $\RL_1$ and $\RL_2$ are incommensurate indicates that $(G_1+G_2)\neq 0$ if $\bG\in (\RL_1^*\times \RL_2^*)\backslash \{\bzero\}$.
Therefore, we can derive that the limit of $T_2$ exists as $R\rightarrow\infty$, 
\begin{align*}
\lim_{R\rightarrow\infty} T_2 = 
\sum_{{\bf G} \in (\RL_1^*\times \RL_2^*)\backslash \{\bzero\}} \Bigg(\int_{\R^d} \big[g(\hH(\xi))\big]_{\bzero,\bG} \dd\xi \Bigg)
\lim_{R\rightarrow \infty} \Bigg(\frac{1}{|B_R|} \int_{\R^d} \chi^2_R(x) e^{-i (G_1+G_2)\cdot x} \dd x \Bigg) = 0 .
\end{align*}
This complete the proof of \eqref{dos:gH00}.
\hfill
$\square$

\subsection{Proof of Theorem \ref{theo:convergence_discretizedDos}}
\label{sec:prooftheo:convergence_discretizedDos}

We recall the statement of Theorem \ref{theo:convergence_discretizedDos}, which gives the error estimate of the numerical approximation \eqref{discreteddos}.
Let $g\in\setg$, then there exist positive constants $C$ and $c$ that do not depend on $L$, $W$, $h$ and $g$, such that
\begin{equation*}
\left\vert 
\DosK{W,L}{\DD_{W,L}} -  \aTr\big(g(H)\big) 
\right\vert 
\leq C
\big( \delta^{-2} e^{- c\delta L} + \delta^{-2} e^{- c \zeta W} + e^{- c\delta/h} \big).
\end{equation*}
Before the proof, we first give a lemma that provides a sharper estimate than Lemma \ref{lemma:locality:reciprocal}, but only for the diagonal elements of $g\big(\hH(\xi)\big)$. 
To do this, we introduce a more specific truncation (than $\Omega_R(\bG)$ used in Appendix \ref{sec:proof:lemma:gH00}) of the wave vectors around $\bG$.
Let
\begin{align}
\label{DWL}
\DD_{W,L}(\bG):= \Big\{ \big(G_1^{''},G_{2}^{''}\big) \in\RL_1^* \times \RL_2^* :~ 
& \big|(G_1^{''}-G_1)+(G_2^{''}-G_2)\big|\leq W, 
\\\nonumber
& \big|(G_1^{''}-G_1)-(G_2^{''}-G_2)\big|\leq L \Big\} .
\end{align}

\begin{lemma}
\label{lemma:locality:reciprocal:GG}
Let $\xi\in\R^d$, $g\in \setg$, $\bG=(G_1,G_2)\in \RL_1^*\times \RL_2^*$.
Then there exist constants $C$ and $c$ independent of $W$ and $L$, such that
\begin{eqnarray}
\label{Wlocality:reciprocal}
\left\vert \lodos{\hH(\xi)^{\DD_{W,L}(\bG)}}{\bG} - \lodos{\hH(\xi)}{\bG} \right\vert \leq C \delta^{-2} \big( e^{-c\delta L} + e^{-c \zeta W} \big).
\end{eqnarray}
\end{lemma}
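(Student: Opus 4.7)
I will follow the contour-integration and Combes--Thomas approach of Lemma \ref{lemma:locality:reciprocal}, refined by a case split that produces the two separate decay rates in $L$ and $W$.

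First, a translation identity reduces the problem to $\bG = \bzero$. A direct check of \eqref{Hhat:xi} gives $\hH(\xi)_{\bG+\mathbf{H},\bG+\mathbf{H}'} = \hH(\xi+G_1+G_2)_{\mathbf{H},\mathbf{H}'}$ for any $\mathbf{H},\mathbf{H}'\in\RL_1^*\times\RL_2^*$, so that $[g(\hH(\xi))]_{\bG,\bG} = [g(\hH(\xi+G_1+G_2))]_{\bzero,\bzero}$ and $\DD_{W,L}(\bG)$ becomes $\DD_{W,L}(\bzero) = \DD_{W,L}$ in the shifted coordinates. It therefore suffices to prove the estimate with $\bG=\bzero$.

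Next, I would take a contour $\Cc$ with $\mathrm{Im}\,z = \pm\delta/2$ that encloses $\sigma(\hH(\xi))$ and $\sigma(\hH^{\DD_{W,L}}(\xi))$ and stays at distance at least $\delta/2$ from $\mathfrak{s}(g)$. Writing the truncation error as $\frac{1}{2\pi i}\oint_\Cc g(z)(R^{\DD_{W,L}}(z)-R(z))_{\bzero,\bzero}\,dz$ and applying the resolvent identity exactly as in the proof of Lemma \ref{lemma:locality:reciprocal} (with a passage to the limit over finite truncations $\Omega\supset\DD_{W,L}$), the $(\bzero,\bzero)$ integrand reduces to
\begin{equation*}
g(z) \sum_{\bG''\in\DD_{W,L},\ \bG'''\notin\DD_{W,L}}[R^{\DD_{W,L}}(z)]_{\bzero,\bG''}\,\hH_{\bG'',\bG'''}\,[R(z)]_{\bG''',\bzero}.
\end{equation*}
Combes--Thomas on both resolvents (exactly as in \eqref{decay_resolvent}) gives $|R(z)_{\bG,\bG'}|,\ |R^{\DD_{W,L}}(z)_{\bG,\bG'}|\leq \delta^{-1}e^{-\bar\gamma\delta|\bG-\bG'|}$, while \eqref{decayVj} gives $|\hH_{\bG'',\bG'''}|\leq Ce^{-\gamma|\bG''-\bG'''|}$. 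Combined with $\int_\Cc|g(z)|\,|dz|\leq C/\zeta$, the task reduces to bounding $C\delta^{-2}\sum_{\bG'''\notin\DD_{W,L}}e^{-c|\bG'''|}$ for some $c>0$ depending on $\gamma,\delta$.

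Third, I would decompose $\DD_{W,L}^c = A\cup B$ with $A=\{|G_1'''-G_2'''|>L\}$ and $B=\{|G_1'''+G_2'''|>W,\ |G_1'''-G_2'''|\leq L\}$. In Case $A$, $|\bG'''|\geq L/\sqrt{2}$, and the Combes--Thomas summation yields directly the $C\delta^{-2}e^{-c\delta L}$ contribution. In Case $B$, to obtain the $\zeta$-rate, one would exploit that for $\bG'''\in B$ with $|\xi|\leq W/2$, the diagonal kinetic energy $\tfrac12|\xi+G_1'''+G_2'''|^2$ exceeds $W^2/8$; a sharper resolvent bound using the large distance from $z$ to this diagonal entry (for small $|\mathrm{Re}\,z|$), combined with the decay $|g(z)|\leq Ce^{-\zeta|\mathrm{Re}\,z|}$ (for large $|\mathrm{Re}\,z|$), produces the $C\delta^{-2}e^{-c\zeta W}$ term. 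The complementary regime $|\xi|>W/2$ is handled directly by Lemma \ref{lemma:tdl:gH00}, which already gives $|[g(\hH(\xi))]_{\bzero,\bzero}|\leq Ce^{-c\zeta|\xi|}\leq Ce^{-c\zeta W/2}$, with a parallel argument for the truncated Hamiltonian.

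The hard part will be the Case $B$ estimate: a naive application of Combes--Thomas only gives the weaker $\delta$-rate $e^{-c\delta W}$, whereas obtaining the sharper $\zeta$-rate in $W$ requires exploiting the interplay between the large diagonal kinetic energy of high-$(G_1+G_2)$ modes and the exponential decay of $g$ at infinity --- precisely the content of the analyticity-plus-decay hypothesis $g\in\setg$.
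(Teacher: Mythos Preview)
Your overall architecture matches the paper: reduce to the origin, write the truncation error as a contour integral of a resolvent difference, handle the $L$-direction by the Combes--Thomas argument of Lemma \ref{lemma:locality:reciprocal}, and isolate the $W$-direction as the hard case. You also correctly identify that the na\"ive Combes--Thomas bound only yields $e^{-c\delta W}$, and that upgrading this to $e^{-c\zeta W}$ requires exploiting both the large kinetic diagonal at high $|G_1+G_2|$ and the decay of $g$ at high energies.

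Where your sketch stops short is precisely the mechanism for the ``sharper resolvent bound'' in Case~B. Knowing that the diagonal entry $\tfrac12|\xi+G_1'''+G_2'''|^2$ is far from $z$ does not by itself give a $\delta$-independent exponent in $[R(z)]_{\bG''',\bzero}$; the standard Combes--Thomas estimate you quote still carries $\delta$ in the exponent because $z$ may be close to other parts of the spectrum. The paper supplies the missing ingredient via a \emph{ring decomposition in kinetic energy}: it partitions the index set into shells $\St_k(z)=\{\bG':|\mathrm{Re}\,z-\tfrac12|G_1'+G_2'|^2|\in(kr,(k{+}1)r]\}$, observes that for $k\neq 0$ the diagonal blocks $(zI_{kk}-\hH_{kk})^{-1}$ have operator norm bounded uniformly in $\delta$ (by Ger\v{s}gorin, since the diagonal is at distance $\gtrsim kr$ from $z$), and then iterates a Schur-complement identity to propagate this into exponential decay of the block resolvents $\mathcal{G}_{k\ell}$ in the ring index, with rate independent of $\delta$. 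The contour is split as you anticipate: on the portion with $|\mathrm{Re}\,z|\lesssim W^2$ the ring-decay supplies the smallness, while on the complementary portion the decay $|g(z)|\leq Ce^{-\zeta|\mathrm{Re}\,z|}$ alone gives $e^{-c\zeta W^2}$.

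In short, your plan is correct in outline but the crucial Case~B step is a genuine gap as written; the paper's resolvent-decomposition / Schur-complement iteration is the concrete device that turns your heuristic about large diagonals into a $\delta$-independent propagation estimate.
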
	

\begin{proof}
Note that by using the same argument as that for the estimate \eqref{locality:reciprocal} in Lemma \ref{lemma:locality:reciprocal}, we can immediately imply that
\begin{eqnarray}
\label{decay-WL-delta}
\left\vert \lodos{\hH(\xi)^{\DD_{W,L}(\bG)}}{\bG} - \lodos{\hH(\xi)}{\bG}  \right\vert \leq C \delta^{-2} \big( e^{-c\delta L} + e^{-c \delta W} \big) .
\end{eqnarray}
Therefore, it is only necessary for us to modify the convergence rate with respect to $W$, that is,
\begin{eqnarray}
\label{proof-4-1-W}
\left\vert \lodos{\hH(\xi)^{\DD_{W',L}(\bG)}}{\bG} - \lodos{\hH(\xi)^{\DD_{W,L}(\bG)}}{\bG} \right\vert \leq C \delta^{-2} e^{-c\zeta W} 
\qquad\forall~W'>W ,
\end{eqnarray}
where $L$ is fixed and the exponent does not depend on $\delta$ as that in \eqref{locality:reciprocal} or \eqref{decay-WL-delta}.
Without loss of generality, we only consider the case $\xi=0$ in the proof, and denote by $\hH:=\hH(\bzero)$ in the following.

By using the decay of $|\hat{V}_{j,G}|$ in \eqref{decayVj}, we see that there exists a constant $C_V>0$ such that $\sum_{j=1}^2\sum_{G\in \RL_j^*}|\hat{V}_{j,G}|<C_V$. 
This together with the matrix element definition \eqref{Hpw} and the Ger\v{s}gorin's theorem \cite{horn2012matrix} 
implies 
\begin{align*}
\df\big(\hH^{\DD_{W',L}(\bG)}\big) \subset \bigcup_{(G'_1,G'_2)\in \DD_{W',L}(\bG)} 
\left\{ \lambda\in \R: ~ \Big\vert \lambda - \frac{1}{2}\big\vert G'_1+G'_2 \big\vert^2 \Big\vert \leq C_V \right\} .
\end{align*}
Then there exists a contour $\Cc$ that encloses all the eigenvalues of $\hH^{\DD_{W',L}(\bG)}$ and satisfies
\begin{align}
\label{resolvant-dist-decay-W}
\min\Bigg\{ {\rm dist}\big(z,\mathfrak{s}(g)\big), ~
{\rm dist}\left(z,\df\big(\hH^{\DD_{W',L}(\bG)}\big)\right) \Bigg\} \geq \delta
\qquad\forall~z\in\Cc .
\end{align}
We can then apply the ``resolvent decomposition" approach to prove the convergence rate, which was developed and used in \cite{massatt2021electronic,massatt2018incommensurate}.
For the sake of completeness, we will present the details in the rest of this proof.

We first introduce some notations for the ``resolvent decomposition".
Let $r>0$ be a given constant independent of $W$.
Let $n$ and $N$ be the largest integers no greater than $W^2/r$ and $W'^2/r$ respectively.
Let $z\in\Cc$ satisfy 
$\big|{\rm Re}(z)-\frac{1}{2}|G_1+G_2|^2\big|<
\frac{W^2}{4}$.
Define
\begin{align*}
\St_0(z) := \Big\{\big(G_1^{'},G_2^{'}\big)\in\DD_{W',L}(\bG) :~ \Big|{\rm Re} (z)- \frac{1}{2}| G_1^{'}+G_2^{'}|^2\Big|\leq r\Big\} .
\end{align*}
Let $\Kc:=\{1,\dots,N\}$, and define the regions
\begin{equation}
\label{def:Wring}
\St_k(z) := \Big\{\big(G_1^{'},G_2^{'}\big)\in\DD_{W',L}(\bG)\backslash\St_0(z) : ~ k r < \Big|{\rm Re} (z)- \frac{1}{2}| G_1^{'}+G_2^{'}|^2\Big|\leq (k+1) r\Big\}
\end{equation}
for $k\in\Kc$.
We then define a sequence of the corresponding projections $P_k:\R^\DD_{W',L}\rightarrow \R^\DD_{W',L}$ by
\begin{eqnarray*}
\big(P_k \psi\big)_{{\bf G}'} = \left\{
\begin{array}{ll}
\psi_{{\bf G}'} \quad & {\rm if}~{\bf G}'
\in \St_k(z), 
\\[1ex]
0 & {\rm otherwise}.
\end{array}
\right.
\end{eqnarray*}
We can further define the projected Hamiltonian by $\hH_{k\ell}:=P_k\hH P_\ell$ for $k,\ell\in\Kc$.
Finally, we shall denote by $\hH_N:=\hH^{\DD_{W',L}(\bG)}$ and $\hH_n:=\hH^{\DD_{W,L}(\bG)}_{\DD_{W',L}(\bG)}$ (the expanded matrix of $\hH^{\DD_{W,L}(\bG)}$ as defined in \eqref{hHRmatrixelement}).

Let $\alpha\in(0,1)$ be a fixed constant and $\beta:=\sup_k\sum_{\ell\in\Kc\backslash\{k\}}\|\hH_{k\ell}\|_2$.
Due to the decay of $|\hat{V}_{j,G}|$ in \eqref{decayVj}, we have $\|\hH_{k\ell}\|_2\leq Ce^{-c_{\gamma}|k-\ell|}$, where the constant $c_{\gamma}$ depends on $\gamma$ given in \eqref{decayVj}.
%
%
We then determine the constant $r$ such that the following condition holds
\begin{equation}
\label{subresolventbound}
\|(zI_{kk}-\hH_{kk})^{-1}\|_2 \leq \alpha \beta ^{-1} 
\qquad \forall~k\in\Kc .
\end{equation}
For $k,\ell\in\Kc$, we define the projected resolvent by
\begin{equation*}
\Gl_{k\ell}:=P_k\big(zI-\hH_{N}\big)^{-1}P_{\ell}.
\end{equation*} 
We have from ${\rm dist}\big(z,\df\big(\hH_N\big)\big) \geq \delta$ that $\|\Gl_{k\ell}\|_2\leq \delta^{-1}$ for any $k,\ell\in\Kc$.
By applying Schur's complement expansions, we have
\begin{equation}
\label{shurcomplement}
\Gl_{k\ell}=-\big(zI_{kk}-\hH_{kk}\big)^{-1}\hH_{k0} \Gl_{0\ell} - \sum_{s\in\Kc\backslash \{k\}}\big(zI_{kk}-\hH_{kk}\big)^{-1}\hH_{ks}\Gl_{s\ell} .
\end{equation}
Combing the facts $\|\hH_{k\ell}\|_2\leq Ce^{-c_{\gamma}|k-\ell|}$, $\|\Gl_{k\ell}\|_2\leq \delta^{-1}$ with \eqref{subresolventbound} and \eqref{shurcomplement}, we have 
\begin{equation}
\label{shurnorm}
\|\Gl_{k\ell}\|_2 \leq C \alpha\beta^{-1} \delta^{-1} e^{-c_{\gamma}|k|} + \sum_{s\in\Kc\backslash \{k\}}\alpha
\beta^{-1}\|\hH_{ks}\|_2\|\Gl_{s\ell}\|_2. 
\end{equation}

For $\ell<k$, denote $X:=(X_1,\dots,X_N)^{\rm T}\in \R^{N}$ with $X_k: =\|\Gl_{k\ell}\|_2$, and $\Phi:=(\Phi_1,\dots,\Phi_N)^{\rm T}\in \R^{N}$ with $\Phi_k:=e^{-c_{\gamma}|k|}$.
We have $\|X\|_2 \leq \delta^{-1}$.
Let $M\in \R^{N\times N}$ be a matrix with
\begin{equation*}
M_{ks}:=\alpha\beta^{-1}
\|\hH_{ks}\|_2(1-\delta_{ks}).
\end{equation*}
We have $M_{ks}\leq C \alpha e^{-c_{\gamma}|k-s|}(1-\delta_{ks})$ and $\|M\|_2\leq\alpha$.
Then we can rewrite \eqref{shurnorm} as 
\begin{equation*}
X_k \leq C \delta^{-1} \Phi_k + (MX)_k,
\end{equation*}
which indicates $X \leq C \delta^{-1}\Phi + MX$.
Applying this inequality to itself $m$ times, we have
\begin{equation}
\label{proof-decay-W-a}
X \leq C \delta^{-1} \sum_{p=0}^{m-1} M^p\Phi + M^m X.
\end{equation}
To estimate $M^p\Phi$, we choose a contour $\Cc_M$ such that 
$\frac{\alpha_0}{2} \leq {\rm dist}\big(z',[-\|M\|_2, \|M\|_2]\big) \leq \alpha_0$ with some constant $\alpha_0<1$.
By using the exponential decay $M_{ks}\leq C_{\alpha}e^{-c_{\gamma}|k-s|}(1-\delta_{ks})$ and the Combes–Thomas type estimate \cite{combes1973asymptotic,chen2016qm}, we have that there exist a constant $\tilde{c}_{\gamma}$ such that
\begin{equation*}
\big\|\big[(z'-M)^{-1}\big]_{ks}\big\|_2\leq C e ^{-\tilde{c}_{\gamma} |k-s|}
\qquad\forall~z'\in\Cc_M .
\end{equation*}
Then we have
\begin{equation*}
\big|(M^p)_{ks}\big|\leq \bigg| \frac{1}{2\pi i}\oint_{\Cc_M}(z')^p \big[(z'-M)^{-1}\big]_{ks}\dd z \bigg| \leq C \alpha_0^p e^{-\tilde{c}_{\gamma}|k-s|},
\end{equation*}
and hence 
\begin{equation}
\label{proof-decay-W-b}
\left|\sum_{s\in\Kc} (M^p)_{ks} e^{-c_{\gamma} s} \right| \leq C \alpha_0^p\sum_{s\in\Kc} e^{-\tilde{c}_{\gamma}|k-s|} e^{-c_{\gamma} s} 
\leq C \alpha_0^p e^{-\bar{c}_{\gamma} k},
\end{equation}
where $\bar{c}_{\gamma}=\min\{\tilde{ c}_\gamma, c_\gamma\}$.
We then deduce from \eqref{proof-decay-W-a}, \eqref{proof-decay-W-b} and $\|M\|_2\leq \alpha$ that 
\begin{equation}
\label{resolventk}
X_k \leq C \delta^{-1} \alpha_0^m e^{-\bar{c}_{\gamma} k} + \alpha^m \delta^{-1}. 
\end{equation} 
By choosing $m\sim k$ with appropriate balancing constant, we have that there exists a constant $c_0>0$ depending on $\alpha,\beta$ and $\gamma$, such that
\begin{equation}
\label{proof-decay-W-c}
X_k \leq C \delta^{-1} e^{-c_0 k} .
\end{equation}  

Remember that we chose a contour $\Cc$ satisfying \eqref{resolvant-dist-decay-W}, which encloses the spectrum of $\hH_N$. 
We have from \eqref{def:Wring} that there exists $n_0<n$, such that $\bG\in \St_{n_0}(z)$.
Let $\Cc_1=\{z\in \Cc:\big|{\rm Re} (z)-\frac{1}{2}|G_1+G_2|^2\big|<
\frac{W^2}{4}\}$ and $\Cc_2=\Cc\backslash \Cc_1$.
Then we estimate that
\begin{align}
\label{diffldosW}
& P_{n_0}\big(g({\hH_N}) - g({\hH_n})\big) P_{n_0}  
\\[1ex] \nonumber
= & ~ \frac{1}{2\pi i }\oint_{\Cc_1} g(z) 
\Big(P_{n_0}\Big( z-\hH_N \Big)^{-1} \sum_{0<s\leq N} P_s P_s
\Big(\hH_N-\hH_n \Big)\sum_{0<t\leq N} P_t P_t
\Big(z-\hH_n \Big)^{-1} P_{n_0} \Big)\dd z 
\\[1ex] \nonumber
& ~ + \frac{1}{2\pi i }P_{n_0}\bigg(\oint_{\Cc_2} g(z)
\Big(\big(z-\hH_N \big)^{-1} - \big(z-\hH_n \big)^{-1}\Big)\dd z \bigg)P_{n_0}. 
\end{align}
Combining \eqref{proof-decay-W-c}, \eqref{diffldosW}, the decay of $\|\hH_{k\ell}\|_2\leq Ce^{-c_{\gamma}|k-\ell|}$, and the decay of $g$ (indicated in the definition \eqref{deftestfunctiong}), we have
\begin{align*}
& \big\|P_{n_0}\big(g({\hH_N}) - g({\hH_n})\big) P_{n_0}\big\|_2 
\\[1ex]
\leq & \frac{1}{2\pi }\oint_{\Cc_1} |g(z)| \sum_{0\leq s\leq n<t\leq N} C_{\alpha}\delta^{-2} e^{-c_0(s+t)} e^{-c_\gamma|s-t|} \dd z + \delta^{-2} e^{-\zeta W^2}
\\[1ex]
\leq & C\delta^{-2} e^{-c n} + C\delta^{-2} e^{-\zeta W^2} 
\end{align*}
with some constant $c>0$.
Note that this estimate is uniform with respect to $N$. Since $W \sim n^{1/2}$, we conclude that there is a $c>0$ such that
\begin{equation*}
\|P_{n_0}\big(g({\hH_N}) - g({\hH_n})\big) P_{n_0}\|_2 \leq C \delta^{-2} e^{-c \zeta n} .
\end{equation*}
This implies \eqref{proof-4-1-W} and hence completes the proof.
\end{proof}

With the more explicit estimate \eqref{Wlocality:reciprocal} in Lemma \ref{lemma:locality:reciprocal:GG} (than that in Lemma \ref{lemma:locality:reciprocal}), we can immediately modify the estimate \eqref{decay-gH-xi} (by replacing $\Omega_{\bar{R}}(\bG)$ with $D_{\bar{R},\infty}(\bG)$ in the proof)
\begin{align*}
\left\vert \lodoss{\hH(\xi)}{\bG}{\bG'} \right\vert 
\leq C\delta^{-1} e^{-c\zeta |\xi+G_1+G_2|} e^{-c\delta|\bG-\bG'|} + C\delta^{-2} e^{-c\zeta |\xi+G_1+G_2|} .
\end{align*}
In particular, by taking $\bG=\bG'=\bzero$, it holds that
\begin{align}
\label{decay-gH-xi-ext}
\left\vert \lodoss{\hH(\xi)}{\bzero}{\bzero} \right\vert 
\leq C\delta^{-2} e^{-c\zeta |\xi|} .
\end{align}
Then we are ready to prove Theorem \ref{theo:convergence_discretizedDos}.

\begin{proof}[Proof of Theorem \ref{theo:convergence_discretizedDos}]
Let $\Kc_h := h\Z^d$ be a uniform mesh over $\R^d$.
We see from \eqref{domainofsampling} that the quadrature mesh can be written as $\Kc_h^W=\Kc_h\cap[-W,W]^d$.

Note that Theorem \ref{theo:tdllimit} implies $\aTr\big(g(H)\big)
= \int_{\R^d} [g(\hH(\xi))]_{{\bf 0,0}} \dd\xi$. 
Then by using the definition \eqref{discreteddos}, we derive that 
\begin{align}
\label{proof-4-3-a}
& \left\vert \DosK{W,L}{\DD_{W,L}} - \aTr\big(g(H)\big) \right\vert 
\\ \nonumber
\leq~ & \left\vert h^d \sum_{\xi\in \Kc_h^W} g\big(\hH(\xi)^{\DD_{W,L}} \big)_{\bzero,\bzero} - h^d \sum_{\xi\in \Kc_h} g\big( \hH(\xi)^{\DD_{W,L}}\big)_{\bzero,\bzero}\right\vert 
\\[1ex] \nonumber
& + \left \vert h^d \sum_{\xi\in \Kc_h} g\big( \hH(\xi)^{\DD_{W,L}}\big)_{\bzero,\bzero} - \int_{\R^d} g\big(\hH(\xi)^{\DD_{W,L}} \big)_{\bzero,\bzero} \dd \xi \right\vert 
\\[1ex] \nonumber
& + \left\vert \int_{\R^d} g\big(\hH(\xi)^{\DD_{W,L}} \big)_{\bzero,\bzero} \dd \xi - \int_{\R^d} g\big(\hH(\xi)\big)_{\bzero,\bzero} \dd \xi \right\vert 
\\[1ex] \nonumber
=: ~& T_1 + T_2 + T_3.
\end{align}

To estimate the first term $T_1$, we have from \eqref{decay-gH-xi-ext} that 
\begin{equation*}
\big[g\big(\hH(\xi)^{\DD_{W,L}} \big)\big]_{\bzero,\bzero} \leq C \delta^{-2} e^{- c \zeta |\xi|}, 
\end{equation*}
which implies
\begin{align}
\label{proof-4-3-T1}
T_1 = h^d \sum_{\xi\in\Kc_h\backslash\Kc_h^W}\big[g\big(\hH(\xi)^{\DD_{W,L}} \big)\big]_{\bzero,\bzero}
\leq C \delta^{-2} \int_{\R^d\backslash[W,W]^d} \big[g\big(\hH(\xi)^{\DD_{W,L}} \big)\big]_{\bzero,\bzero} \dd\xi \leq C \delta^{-2} e^{- c \zeta W} .
\end{align}

The second term $T_2$ represents the quadrature error of trapezoidal rule.
We observe that for $g\in\setg$, $\big[g\big(\hH(\xi)^{\DD_{W,L}} \big)\big]_{\bzero,\bzero}$ is analytic in the region $\big\{\xi=(\xi_1,\cdots,\xi_d) \in\C^d ~:~ |{\rm Im}(\xi_i)|\leq a\delta, ~i=1,\cdots,d \big\}$ with some constant $a>0$.
Then by using \cite[Theorem 5.1]{trefethen2014exponentially}, we have that the quadrature error of trapezoidal rule decays exponentially fast with respect to the mesh size, i.e. there exists a constant $c_a>0$ depending on $a$ such that
\begin{equation}
\label{proof-4-3-T2}
T_2 \leq C e^{-c_a\delta/h}. 
\end{equation}

To estimate the last term $T_3$, we shall combine the estimate \eqref{proof-decay-gH-xi} with that in \eqref{diffldosW} and derive that 
\begin{equation}
\label{decay-gHxi-diff-ext}
\left\vert \lodos{\hH(\xi)^{\DD_{W,L}}}\bzero -\lodos{\hH(\xi)}\bzero\right\vert \leq C  \delta^{-2}  
e^{- c\zeta |\xi|} \big( e^{- c\delta L} + e^{- c \zeta W}\big).
\end{equation}
The combination is direct as the two estimates come from two completely different parts in the contour integral: \eqref{diffldosW} is from the difference between the resolvents and \eqref{proof-decay-gH-xi} is from the decay of $g$ respectively.
We then have from the estimate \eqref{decay-gHxi-diff-ext} that
\begin{eqnarray}
\label{proof-4-3-T3}
T_3 \leq C \delta^{-2} \big( e^{- c\delta L} + e^{- c \zeta W}\big) \int_{\R^d}  e^{- c\zeta |\xi|} \dd\xi
\leq C \delta^{-2} \big( e^{- c\delta L} + e^{- c\zeta W} \big). 
\end{eqnarray}

Taking into accounts \eqref{proof-4-3-a}, \eqref{proof-4-3-T1}, \eqref{proof-4-3-T2} and \eqref{proof-4-3-T3}, we have 
\begin{equation*}
\left\vert\DosK{W,L}{\DD_{W,L}} - \aTr\big(g(H)\big) \right\vert \leq C \Big( \delta^{-2} e^{- c\delta L} + \delta^{-2} e^{- c \zeta W} + e^{- c\delta/h} \Big),
\end{equation*}
which completes the proof.
\end{proof}

\subsection{Proof of Theorem \ref{theorem:WLconvergence-dos}}
\label{sec:prooftheo:WLconvergence-dos}

We recall the statement of Theorem \ref{theorem:WLconvergence-dos}, which gives the error estimate of the numerical approximation \eqref{def:dos-reciprocalWL}. 
Let $g\in \setg$, then there exist positive constants $C$ and $c$ independent of $L$, $W$ and $g$, such that 
\begin{equation*}
\left\vert 
\pwdosWL{\hH^{\DD_{W,L}}} -  \aTr\big(g(H)\big) 
\right\vert 
\leq \pmb{\phi}(L) + C \delta^{-2} e^{-c \zeta W} ,
\end{equation*}
where $\pmb{\phi}(L)\rightarrow 0$ as $L\rightarrow\infty$.
To prove Theorem \ref{theorem:WLconvergence-dos}, we first give the following two lemmas.
We remind that without shifting, $\hat{H}$ is defined by \eqref{Hpw}.

\begin{lemma}
\label{lem:shiftrelation}
Let $g\in \setg$. Then for any $\bG=(G_1,G_2) \in \RL_1^* \times \RL_2^*$, we have
\begin{eqnarray}
\label{shift}
\lodos{\hH}{\bG} = \lodos{\hH(G_1+G_2)}{\bzero}.
\end{eqnarray}
\end{lemma}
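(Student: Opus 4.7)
The plan is to exhibit a unitary shift of wave-vector indices that conjugates $\hH$ into $\hH(G_1+G_2)$ while mapping the diagonal index $\bG$ to the diagonal index $\bzero$. For a finite subset $\Omega\subset\RL_1^*\times\RL_2^*$, I introduce $U_{\bG}:\ell^2(\Omega)\to\ell^2(\Omega+\bG)$ defined on canonical basis vectors by $U_{\bG}e_{\bG'}=e_{\bG+\bG'}$. It is unitary, and a short computation from the matrix element formulas \eqref{Hpw} and \eqref{Hhat:xi} shows
\[
U_{\bG}^*\,\hH^{\Omega+\bG}\,U_{\bG} \;=\; \hH(G_1+G_2)^{\Omega}.
\]
The underlying cancellation is that $\frac{1}{2}\big|(G_1+G_1')+(G_2+G_2')\big|^2$ equals $\frac{1}{2}\big|(G_1+G_2)+(G_1'+G_2')\big|^2$, which matches the kinetic term of $\hH(G_1+G_2)$, while the Fourier coefficients $\hat V_{j,\cdot}$ depend only on differences of wave vectors and are therefore translation-invariant.

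Next, since $\Omega$ is finite and $U_{\bG}$ is unitary, the spectral theorem yields $g\big(\hH(G_1+G_2)^{\Omega}\big) = U_{\bG}^*\, g\big(\hH^{\Omega+\bG}\big)\, U_{\bG}$. Taking the $(\bzero,\bzero)$ matrix entry on the left and using $U_{\bG}e_{\bzero}=e_{\bG}$ gives
\[
\big[g\big(\hH(G_1+G_2)^{\Omega}\big)\big]_{\bzero,\bzero} \;=\; \big[g\big(\hH^{\Omega+\bG}\big)\big]_{\bG,\bG}.
\]

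Finally, I would take $\Omega=\Omega_R=B_R\cap(\RL_1^*\times\RL_2^*)$ with $R$ large enough that $\bzero,\bG\in\Omega_R$, and let $R\to\infty$. Since $\bG$ is fixed, both $\Omega_R$ and the translated family $\Omega_R+\bG$ exhaust $\RL_1^*\times\RL_2^*$ as $R\to\infty$, so Lemma \ref{lemma:locality:reciprocal} (or, more precisely, its proof, which applies to any exhausting sequence of finite domains) ensures that the left-hand side converges to $\big[g(\hH(G_1+G_2))\big]_{\bzero,\bzero}$ and the right-hand side to $\big[g(\hH)\big]_{\bG,\bG}$, establishing \eqref{shift}. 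The content of the lemma is essentially algebraic: once the correct unitary shift is identified the identity drops out by spectral calculus, and the only delicate point is the bookkeeping needed to ensure the truncations on both sides are compatible as the thermodynamic limits are taken — not a genuine obstacle, just a verification.
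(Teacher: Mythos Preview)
Your proposal is correct and essentially identical to the paper's own proof: the paper writes out the index identity
\[
\big(\hH^{\Omega_R(\bG)}\big)_{\bG',\bG''} = \big(\hH(G_1+G_2)^{\Omega_R(\bzero)}\big)_{\bG'-\bG,\bG''-\bG},
\]
deduces the same equality for $g(\cdot)$, sets $\bG'=\bG''=\bG$, and passes to the limit via Lemma~\ref{lemma:locality:reciprocal}. Your unitary $U_{\bG}$ is just a cleaner packaging of that relabeling (noting $\Omega_R+\bG=\Omega_R(\bG)$ since $\bG$ lies in the lattice), and your remark that the proof of Lemma~\ref{lemma:locality:reciprocal} covers any exhausting family of truncations is exactly the implicit step the paper also relies on.
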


\begin{proof}
From the definition \eqref{Hpw} of Hamiltonian matrix elements, we see that for $\xi=G_1+G_2$,
\begin{eqnarray*}
\left(\hH^{\Omega_R(\bG)}\right)_{\bG',\bG''} = \left(\hH(\xi)^{\Omega_R(\bzero)}\right)_{\bG'-\bG,\bG''-\bG}
\qquad\forall~R>0~~ {\rm and}~|\bG'-\bG|, ~ |\bG''-\bG|<R.
\end{eqnarray*}
This implies
\begin{eqnarray}
\label{gHRG}
\left(g\big(\hH^{\Omega_R(\bG)}\big)\right)_{\bG',\bG''} = \left(g\big(\hH(\xi)^{\Omega_R(\bzero)}\big)\right)_{\bG'-\bG,\bG''-\bG} .
\end{eqnarray}
Let $\bG'=\bG''=\bG$ in \eqref{gHRG} and take the limit $R\rightarrow\infty$ on both sides, we can derive \eqref{shift} by using Lemma \ref{lemma:locality:reciprocal}.
\end{proof}

The second lemma mirrors the ergodicity of incommensurate system (see \cite[Proposition 2.4]{cances2017generalized} and \cite[Theorem 2.1 and Page 14]{massatt2017electronic}).
We refer to \cite[Page 13-14]{massatt2017electronic} for the proof. 

\begin{lemma}
\label{lemma:localgeometries}
Let $\RL_1$ and $\RL_2$ be incommensurate lattices, with $\Gamma_1$ and $\Gamma_2$ the associated unit cells respectively. 
Let $k\in\{1,2\}$, $\bar{k}=1$ if $k=2$ and $\bar{k}=2$ if $k=1$.
If $f\in C(\R^d)$ is periodic with respect to $\RL_k$, then 
\begin{equation}
\label{localgeometries}
\lim_{R\to\infty} \frac{1}{\# (\RL_{\bar{k}}\cap B_{R})} \sum_{\ell \in  \RL_{\bar{k}}\cap B_{R} }f(\ell) = \frac{1}{|\Gamma_{k}|} \int_{\Gamma_{k}} f(b) \dd b .
\end{equation}
Moreover, for any given Fourier mode $f(\ell) = e^{i\ell\cdot s}$ with $s\in\RL_k^*$, there exists a constant $C_s>0$ depending on $s$ but independent of $R$, such that
\begin{equation}
\label{error_ergodicity}
\left| \frac{1}{\# (\RL_{\bar{k}}\cap B_{R})} \sum_{\ell \in \RL_{\bar{k}}\cap B_{R} }f(\ell) - \frac{1}{|\Gamma_{k}|} \int_{\Gamma_{k}} f(b) \dd b \right| 
\leq C_s R^{-1} .
\end{equation}
\end{lemma}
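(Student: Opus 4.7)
The plan is to first establish the explicit $R^{-1}$ rate in \eqref{error_ergodicity} for single Fourier modes, and then deduce the general convergence \eqref{localgeometries} by uniformly approximating continuous periodic functions by trigonometric polynomials. The two ingredients needed are (i) a Weyl-type bound on incommensurate exponential sums over lattice balls, and (ii) the Stone--Weierstrass density of trigonometric polynomials in $C(\R^d/\RL_k)$.

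For \eqref{error_ergodicity}, observe first that $\frac{1}{|\Gamma_k|}\int_{\Gamma_k}e^{is\cdot b}\,\dd b=\delta_{s,0}$ for $s\in\RL_k^*$, so when $s=0$ both sides of \eqref{error_ergodicity} equal $1$ and there is nothing to prove. For $s\in\RL_k^*\setminus\{0\}$, the assumed incommensurability of the reciprocal lattices $\RL_1^*$ and $\RL_2^*$ (which is equivalent to $\RL_1^*\cap\RL_2^*=\{0\}$) forces $s\notin\RL_{\bar k}^*$. Writing $\ell=A_{\bar k}n$ with $n\in\Z^d$ and setting $\alpha:=A_{\bar k}^{\mathrm T}s$, one has $\alpha\notin 2\pi\Z^d$, so at least one coordinate satisfies $\alpha_j/(2\pi)\notin\Z$; assume $j=1$. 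Decompose the lattice-ellipsoid $E_R:=\{n\in\Z^d:|A_{\bar k}n|\le R\}$ by fixing $n'=(n_2,\ldots,n_d)$: each slab yields a contiguous integer interval $[a(n'),b(n')]$ in $n_1$, and the inner geometric sum $\sum_{n_1=a(n')}^{b(n')}e^{i\alpha_1 n_1}$ is bounded in modulus by $M_s:=2/|1-e^{i\alpha_1}|<\infty$. Since the projection of $E_R$ onto $\Z^{d-1}$ has cardinality $O(R^{d-1})$, this gives $|\sum_{\ell\in\RL_{\bar k}\cap B_R}e^{i\ell\cdot s}|\le M_s\cdot O(R^{d-1})\le C_s R^{d-1}$. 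Combined with the classical lattice-counting estimate $\#(\RL_{\bar k}\cap B_R)=|B_R|/|\Gamma_{\bar k}|+O(R^{d-1})\ge cR^d$, dividing yields \eqref{error_ergodicity}.

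For \eqref{localgeometries} with general continuous $\RL_k$-periodic $f$, the Stone--Weierstrass theorem on the torus $\R^d/\RL_k$ provides, for each $\eps>0$, a trigonometric polynomial $P(x)=\sum_{s\in F}c_s e^{is\cdot x}$ with $F\subset\RL_k^*$ finite and $\|f-P\|_{L^\infty}<\eps$. Writing $f=P+(f-P)$, the contribution of $(f-P)$ to the difference in \eqref{localgeometries} is bounded by $2\eps$ uniformly in $R$, while linearity together with the Fourier-mode estimate applied to each summand of $P$ forces its contribution to vanish at rate $O_F(R^{-1})$ as $R\to\infty$. Sending $R\to\infty$ and then $\eps\to 0$ delivers \eqref{localgeometries}.

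The main obstacle is the slab-decomposition Weyl bound: one must verify that a coordinate $j$ with $\alpha_j\notin 2\pi\Z$ can always be selected from the hypothesis $\alpha\notin 2\pi\Z^d$, that the resulting constant $2/|1-e^{i\alpha_j}|$ is finite and depends only on $s$ and on the fixed matrix $A_{\bar k}$, and that the projection-counting bound on the slab count is $O(R^{d-1})$ uniformly in $R$. These reduce to elementary geometry of the ellipsoid $A_{\bar k}^{-1}B_R$ together with a convex-body lattice-point estimate; the subsequent density argument for general continuous $f$ is then routine.
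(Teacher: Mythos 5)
Your proposal is correct. Note that the paper does not prove this lemma internally: it defers to \cite[Proposition 2.4]{cances2017generalized} and \cite[Theorem 2.1, pp.~13--14]{massatt2017electronic}, and the argument given there is essentially the same Weyl-equidistribution route you take, so your write-up amounts to a self-contained version of the cited proof. The two ingredients are sound: (i) for a nonzero mode $s\in\RL_k^*$, the reciprocal-space incommensurability assumed in the paper does imply $\RL_1^*\cap\RL_2^*=\{\pmb{0}\}$ (a nonzero common vector would translate each reciprocal lattice, hence their union, onto itself), so $\alpha=A_{\bar k}^{\rm T}s\notin 2\pi\Z^d$ and some coordinate $\alpha_j$ has $e^{i\alpha_j}\neq 1$; the slab-wise geometric sum then gives $|\sum_{\ell\in\RL_{\bar k}\cap B_R}e^{i\ell\cdot s}|\leq C_s R^{d-1}$ with $C_s$ depending only on $s$ and $A_{\bar k}$, and dividing by $\#(\RL_{\bar k}\cap B_R)\gtrsim R^d$ yields the $C_sR^{-1}$ bound (for small $R$ the left-hand side is at most $2$, so the constant absorbs that range); (ii) Stone--Weierstrass (or Fej\'er) on $\R^d/\RL_k$ plus linearity upgrades the single-mode statement to all $f\in C(\R^d)$ that are $\RL_k$-periodic. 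Your presentation also makes transparent why the rate in \eqref{error_ergodicity} is mode-dependent --- $C_s\sim |1-e^{i\alpha_j}|^{-1}$ can blow up along near-resonant $s$ --- which is precisely the caveat the paper raises after the lemma and again in the proof of Theorem \ref{theorem:WLconvergence-dos} when explaining why no uniform $\mathcal{O}(L^{-1})$ rate can be claimed for general $f$.
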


Note that the constant $C_s$ in \eqref{error_ergodicity} depends on the Fourier mode $s$, which could explode at some $s\in\RL_k^*$ relying on how the incommensurate lattices $\RL_1$ and $\RL_2$ are misaligned (see \cite[Proposition 2.4]{cances2017generalized} and \cite[Theorem 2.1 and Page 14]{massatt2017electronic}).
For example, this constant could become significantly different for a twisted bilayer graphene as the twist angle varies.

\vskip 0.2cm

\begin{proof}
[\it Proof of Theorem \ref{theorem:WLconvergence-dos}]
We define the following region 
\begin{align}
\label{def:SW}
S_W := \cup_{j\in \RL^*_1 \cap B_W} \big(j + \Gamma_1^*\big) .
\end{align}
By using \eqref{def:dos-reciprocalWL} and $\aTr\big(g(H)\big)
= \int_{\R^d} [g(\hH(\xi))]_{{\bf 0,0}} \dd\xi$ from Theorem \ref{theo:tdllimit}, we can obtain that
\begin{align}
\label{proof-4-1-a} 
& \left\vert \pwdosWL{\hH^{\DD_{W,L}}} -\int_{\R^d} g\big(\hH(\xi)\big)_{\bzero,\bzero} \dd \xi\right\vert 
\\[1ex] \nonumber
\leq~& \frac{|\Gamma_1^*||\Gamma_2^*|}{S_{d,L}}\left\vert\sum_{\bG\in \DD_{W,L}} \lodos{\hH^{\DD_{W,L}}}{\bG}-\sum_{\bG\in\DD_{W,L}} \lodos{\hH}{\bG} \right\vert 
\\[1ex] \nonumber
& + \left\vert \frac{|\Gamma_1^*||\Gamma_2^*|}{S_{d,L}} \sum_{\bG\in\DD_{W, L}}\lodos{\hH}{\bG} - \int_{S_W} \lodos{\hH(\xi)}{\bzero} \dd \xi\right\vert
\\[1ex] \nonumber
& + \left\vert \int_{S_W} \lodos{\hH(\xi)}{\bzero} \dd \xi - \int_{\R^d} \lodos{\hH(\xi)}{\bzero} \dd \xi\right\vert
\\[1ex] \nonumber
=: ~ & T_1+T_2+T_3.
\end{align}

To estimate the first term $T_1$, we have from Lemma \ref{lemma:locality:reciprocal:GG} that for any $\bG=(G_1,G_2) \in \DD_{\frac{W}{2},L}$, there exist a constant $c>0$ independent of $L,W$ and $g$ such that
\begin{align}
\label{proof-4-1-T1-a}
\left\vert \lodos{\hH^{\DD_{W,L}}}{\bG}- \lodos{\hH}{\bG} \right\vert
\leq C \delta^{-2} \Big( e^{-c \delta ( L-|G_1-G_2| )} + e^{-c\zeta( W-|G_1+G_2|)} \Big) .
\end{align}
Moreover, we have from the estimate \eqref{decay-gH-xi-ext} that for any $\bG=(G_1,G_2) \in \DD_{W,L}\backslash\DD_{\frac{W}{2},L}$,
\begin{equation}
\label{proof-4-1-T1-b}
\left\vert \lodos{\hH}{\bG} \right\vert
\leq C \delta^{-2}e^{- c\zeta \frac{W}{2}} 
\quad{\rm and}\quad
\left\vert \lodos{\hH^{\DD_{W,L}}}{\bG} \right\vert \leq C \delta^{-2} e^{- c \zeta \frac{W}{2}} .
\end{equation}
By combing \eqref{proof-4-1-T1-a} and \eqref{proof-4-1-T1-b}, we can obtain
\begin{align}
\label{proof-4-1-T1}
T_1 & \leq \frac{|\Gamma_1^*||\Gamma_2^*|}{S_{d,L}} \Bigg( \sum_{\bG\in \DD_{\frac{W}{2},L}} \left| \lodos{\hH^{\DD_{W,L}}}{\bG} - \lodos{\hH}{\bG} \right|
\\[1ex] \nonumber
& \qquad \qquad \qquad 
+ \sum_{\bG\in \DD_{W,L} \backslash \DD_{\frac{W}{2},L}} \left| \lodos{\hH^{\DD_{W,L}}}{\bG} - \lodos{\hH}{\bG} \right| \Bigg)
\\[1ex] \nonumber
& \leq C \delta^{-2} \left( e^{- c\delta L} + e^{- c\zeta W} \right) .
\end{align}

To estimate the second term $T_2$ in \eqref{proof-4-1-a}, we obtain from the ``symmetry" in Lemma \ref{lem:shiftrelation} and the definition of $S_W$ in \eqref{def:SW} that
\begin{align}
\label{proof-theo4.3-t2}
& \quad T_2 = \bigg| \frac{|\Gamma_1^*||\Gamma_2^*|}{S_{d,L}} \sum_{\bG\in \DD_{W, L}}\lodos{\hH(G_1+G_2)}{\bzero} - \sum_{G_1\in\RL_1^*\cap B_W} \int_{\Gamma_1^*} \lodos{\hH(b+G_1)}\bzero \dd b\bigg|
\\[1ex] \nonumber
& = \Bigg| \frac{|\Gamma_1^*||\Gamma_2^*|}{S_{d,L}} \sum_{\ell\in\RL_2^*\cap B_{\frac{L}{2}}} \sum_{\substack{G_1\in\RL_1^*\\|G_1+\ell|\leq W}} \lodos{\hH(\ell+G_1)}{\bzero} - \int_{\Gamma_1^*} \sum_{G_1\in\RL_1^*\cap B_W}\lodos{\hH(b+G_1)}\bzero \dd b \Bigg|  ,
\end{align}
where the the definition \eqref{set:cutoff} has been used to get the second equality.

Let
\begin{eqnarray}
\label{proof-4-1-T2-a}
f(\ell):=\sum_{G_1\in\RL_1^*,~|G_1+\ell|\leq W} \lodos{\hH(\ell+G_1)}{\bzero} .
\end{eqnarray}
We see that $f(\ell)$ is continuous and periodic with respect to $\RL_2^*$.
Note that $S_{d,L}$ is the volume of the $d$-dimensional ball with diameter $L$, we have the fact $\frac{S_{d,L}}{|\Gamma_2^*|}=\#\big(\RL_2^* \cap B_{\frac{L}{2}}\big)+\mathcal{O}(L^{d-1})$.
Then we can apply Lemma \ref{lemma:localgeometries} with $R=\frac{L}{2}$ in \eqref{localgeometries} and derive that
\begin{align}
\label{proof-4-1-T2-c}
\lim_{L\rightarrow\infty} \bigg| \frac{|\Gamma_1^*||\Gamma_2^*|}{S_{d,L}} \sum_{\ell\in\RL_2^* \cap B_{\frac{L}{2}}} f(\ell)
- \int_{\Gamma_1^*} f(b) \dd b \bigg| = 0 .
\end{align}
Note that although there is an $\mathcal{O}(L^{-1})$ estimate for any given Fourier mode in \eqref{error_ergodicity}, we can not derive such a uniform convergence rate since we do not know $f(\cdot)$ in \eqref{proof-4-1-T2-a} consists of finitely many Fourier modes and the constant $C_s$ in \eqref{error_ergodicity} could explode depending on the incommensurate alignment.

To bridge the gap between \eqref{proof-theo4.3-t2} and \eqref{proof-4-1-T2-c}, we estimate by using \eqref{decay-gH-xi-ext} that
\begin{align}
\label{proof-4-1-T2-e}
\bigg| \sum_{G_1\in\RL_1^*\cap B_W} \lodos{\hH(b+G_1)}{\bzero} - \sum_{G_1\in\RL_1^*,~|G_1+b|\leq W} \lodos{\hH(b+G_1)}{\bzero} \bigg| 
\leq C \delta^{-2} W^{d-1} e^{- c \zeta W}
\end{align}
for any $b\in\Gamma_1^*$.
Then by taking into accounts \eqref{proof-theo4.3-t2}, \eqref{proof-4-1-T2-c} and \eqref{proof-4-1-T2-e}, we obtain
\begin{eqnarray}
\label{proof-4-1-T2}
T_2 \leq \pmb{\phi}(L) + C \delta^{-2} W^{d-1} e^{- c \zeta W} 
\end{eqnarray}
with $\lim_{L\rightarrow\infty}\pmb{\phi}(L) = 0$.

For the last term $T_3$, we have from the estimate \eqref{decay-gH-xi-ext} that
\begin{equation}
\label{proof-4-1-T3}
T_3 \leq C \delta^{-2} \int_{\R^d \backslash S_W} e^{- c\zeta |\xi|} \dd \xi \leq C \delta^{-2} e^{- c \zeta W}.
\end{equation}

Finally, by combing \eqref{proof-4-1-a}, \eqref{proof-4-1-T1}, \eqref{proof-4-1-T2} and \eqref{proof-4-1-T3}, we can complete the proof of \eqref{errordos-reciprocal}.
\end{proof}

\bibliographystyle{siamplain}
\bibliography{bib}

\end{document}